\newtheorem{theorem}{Theorem}[section]
\newtheorem{lemma}[theorem]{Lemma}
\newtheorem{proposition}[theorem]{Proposition}
\newtheorem{corollary}[theorem]{Corollary}
\theoremstyle{definition}
\theoremstyle{remark}
\newtheorem{remark}[theorem]{Remark}
\newtheorem{notations}[theorem]{Notations}
\numberwithin{equation}{section}
\newcommand{\1}{\mathbb{1}}
\newcommand{\R}{\mathbb{R}}
\newcommand{\cF}{\mathcal{F}}
\newcommand{\be}{\begin{equation}}
\newcommand{\ee}{\end{equation}}
\DeclareFontFamily{U}{mathx}{\hyphenchar\font45}
\DeclareFontShape{U}{mathx}{m}{n}{
      <5> <6> <7> <8> <9> <10>
      <10.95> <12> <14.4> <17.28> <20.74> <24.88>
      mathx10
      }{}
\DeclareSymbolFont{mathx}{U}{mathx}{m}{n}
\DeclareMathAccent{\widecheck}{0}{mathx}{"71}
\DeclareMathAccent{\wideparen}{0}{mathx}{"75}
\newcommand{\tria}{{\mathcal T}}
\newcommand{\bbT}{\mathbb{T}}
\newcommand{\cP}{\mathcal{P}}
\newcommand{\identity}{\mathrm{Id}}
\newcommand{\V}{\mathscr{V}}
\newcommand{\U}{\mathscr{U}}
\newcommand{\B}{\mathscr{B}}
\newcommand{\Ss}{\mathscr{S}}
\newcommand{\W}{\mathscr{W}}
\newcommand{\Y}{\mathscr{Y}}
\newcommand{\ZZ}{\mathscr{Z}}
\DeclareMathOperator{\ran}{ran}
\DeclareMathOperator{\supp}{supp}
\DeclareMathOperator{\Span}{span}
\DeclareMathOperator{\diag}{diag}
\newcommand{\cL}{\mathcal L}
\newcommand{\Lis}{\cL\mathrm{is}}
\newcommand{\cLis}{\cL\mathrm{is}_c}
\newcommand{\lnrm}{\mathopen{| \! | \! |}}
\newcommand{\rnrm}{\mathclose{| \! | \! |}}
\newcommand{\trianu}{{\nu}}
\title{Uniform preconditioners for problems of positive order}
\date{\today}
\author{Rob Stevenson, Raymond van Veneti\"{e}}
\address{
Korteweg-de Vries Institute for Mathematics,
University of Amsterdam,
P.O. Box 94248,
1090 GE Amsterdam, The Netherlands}
\email{r.p.stevenson@uva.nl, r.vanvenetie@uva.nl}
\thanks{The first author has been supported by NSF Grant DMS 172029.
The second author has been supported by the Netherlands Organization for Scientific Research
(NWO) under contract.~no. 613.001.652}
\subjclass[2010]{
65F08, 
65N38, 
65N30, 
45Exx. 
}
\keywords{Operator preconditioning, uniform preconditioners, finite- and boundary elements}
\begin{document}
\begin{abstract}
Using the framework of operator or Cald\'{e}ron preconditioning, uniform preconditioners are constructed for elliptic operators of order $2s \in [0,2]$ discretized with continuous finite (or boundary) elements.
The cost of the preconditioner is the cost of the application an elliptic opposite order operator discretized with discontinuous or continuous finite elements on the same mesh, plus minor cost of linear complexity. Herewith the construction of a so-called dual mesh is avoided.
\end{abstract}

\maketitle

\section{Introduction}
This paper deals with the construction of \emph{uniform} preconditioners for operators of \emph{positive} order,
using the framework of `operator preconditioning' (\cite{138.26}).
It will build on our experiences with this approach for problems of negative order reported in~\cite{249.97}.

For some $d$-dimensional domain (or manifold) $\Omega$, a measurable, closed, possibly empty $\gamma \subset \partial\Omega$, and an $s \in [0,1]$,
we consider the Sobolev space
\[
\V:={[L_2(\Omega),H^1_{0,\gamma}(\Omega)]}_{s,2}.
\]
For $\V_\tria \subset \V$ a closed, e.g.~finite dimensional subspace, and $A_\tria \colon \V_\tria \to \V_\tria'$ some boundedly invertible linear operator,
we are interested in constructing a \emph{preconditioner} $G_\tria \colon \V_\tria' \to \V_\tria$.
More specifically, thinking of a a \emph{family} of spaces $\V_\tria$ and operators $A_\tria \colon \V_\tria \to \V_\tria'$, our aim is
to construct preconditioners $G_\tria$ such that $G_\tria A_\tria\colon \V_\tria \to \V_\tria$ is uniformly boundedly invertible.

It is well-known that such preconditioners of multi-level type are available. The advantage of operator preconditioning is, however, that it does not require a hierarchy of trial spaces.

In order to apply the operator preconditioning framework, one needs to  construct families of closed subspaces $\W_\tria \subset \W:=\V'$,
uniformly boundedly invertible $B_\tria \colon \W_\tria \to \W_\tria'$, and uniformly boundedly invertible  $D_\tria \colon \V_\tria \to \W_\tria'$.
Then the resulting preconditioners~$G_\tria$ are of the form
\[
G_\tria := D^{-1}_\tria B_\tria {(D'_\tria)}^{-1}.
\]

The canonical setting is that for $A\colon \V \rightarrow \V'$, i.e., an operator of order $2s$, and an \emph{opposite order} operator $B\colon \W \rightarrow \W'$,
both  boundedly invertible and coercive, it holds that
$(A_\tria u)(v) := (Au)(v)$ ($u,v \in \V_\tria$),
$(B_\tria u)(v) := (Bu)(v)$ ($u,v \in \W_\tria$), and $(D_\tria u)(v) := \langle u,v\rangle_{L_2(\Omega)}$ ($u \in \V_\tria, v \in \W_\tria$).
A typical example for $s=1/2$ is that $A$ is the \emph{Hypersingular Integral} operator, and $B$ is the  \emph{Weakly Singular Integral} operator.

A careful selection of $\W_\tria$ has to be made to ensure that $D_\tria \colon \V_\tria \to \W_\tria'$ is uniformly boundedly invertible.
A suitable family of $(\V_\tria, \W_\tria)$ pairs has been introduced in~\cite{249.16,35.8655}.
Here $\tria$ is a triangular partition of a \emph{two-dimensional} domain or manifold, $\V_\tria$ is the
space of continuous piecewise linears w.r.t.~$\tria$, and $\W_\tria$
is a subspace of the space of piecewise constants w.r.t.~a barycentric refinement of $\tria$,
constructed by subdividing each triangle into 6 subtriangles by connecting its vertices and midpoints with its barycenter.
It has been shown in~\cite{249.16, 138.275} that the preconditioner arising from these pairs $(\V_\tria, \W_\tria)$ is a uniform preconditioner
for families of partitions that satisfy a certain mildly-grading condition.

A problem with the constructions from~\cite{249.16,35.8655} appears when one considers the matrix representation $G_\tria$ in the standard bases, i.e.
$\bm{G}_\tria = \bm{D}_\tria^{-1}\bm{B}_\tria\bm{D}_\tria^{-\top}$.
Indeed, this matrix $\bm{D}_\tria$ is \emph{not} diagonal, and its inverse is densely populated so that it has to be
approximated.
Moreover, in order to get a uniform preconditioner, the accuracy with which $\bm{D}_\tria^{-1}$ has to be approximated increases with a decreasing mesh-size.
As a result, an application of $\bm{D}_\tria^{-1}$ cannot be expected to execute in linear time.

Another (practical) issue with these constructions is the need for the construction of the non-standard \emph{barycentrical} refinement of $\tria$.
This refinement increases the number of elements by a factor $6$, and therefore also increases the cost of evaluating $B_\tria : \W_\tria \to \W_\tria'$.

\subsection{Contributions}
With $\V_\tria$ being the space of continuous piecewise linears, the construction of $\W_\tria$ presented in this paper improves on the existing approach from~\cite{249.16,35.8655} concerning the following aspects:
\begin{itemize}
    \item The matrix representation $\bm{D}_\tria$ of $D_\tria$ will be diagonal, allowing one to (exactly) evaluate $\bm{D}_\tria^{-1}$ in linear time;
    \item The operator $G_\tria$ will be a uniformly well-conditioned preconditioner for families of uniformly shape regular partitions, without requiring
        a mildly-grading assumption on the partitions;
    \item By using a stable decomposition of $\W_\tria$ into a standard finite element space $\U_\tria$ w.r.t.~$\tria$ (either
    being the space of piecewise constants or $\U_\tria=\V_\tria$) and
    some bubble space, our $B_\tria$ will be the \emph{sum} of the corresponding Galerkin discretization operator of the opposite order operator  $B$, and an operator whose representation is a diagonal, with which the undesired barycentrical refinement is avoided;
    \item The construction of $\W_\tria$ applies in any space dimension, and extends to non piecewise planar
    manifolds.
\end{itemize}

We will extend the preconditioners to higher order finite element spaces by applying a subspace correction framework.

\subsection{Outline}
Sect.~\ref{sec:notations} recalls some notation that will be used throughout the article.
In Sect.~\ref{sec:theory} the general theory of operator preconditioning is summarized.
In Sect.~\ref{sec:preconditioning_cont_pw_lin}, the framework is specialized to operators of positive order discretized with \emph{continuous} piecewise linears.
Sect.~\ref{sec:decompose_b} give two constructions of $B_{\tria} \in \cLis(\W_\tria, \W_\tria')$ that avoid any refinement of the partition $\tria$ that underlies the trial space $\V_\tria$.
In Sect.~\ref{sec:extensions} the preconditioners are generalized to higher order finite element spaces, and to spaces defined on manifolds.
Finally, in Sect.~\ref{sec:numerical_results} we report some numerical results obtained with the new preconditioners.

\section{Notations}\label{sec:notations}
\begin{notations}
In this work, by $\lambda \lesssim \mu$ we will mean that $\lambda$ can be bounded by a multiple of $\mu$, independently of parameters which $\lambda$ and $\mu$ may depend on, with the sole exception of the space dimension $d$, or in the manifold case, on the parametrization of the manifold that is used to define the finite element spaces on it. Obviously, $\lambda \gtrsim \mu$ is defined as $\mu \lesssim \lambda$, and $\lambda\eqsim \mu$ as $\lambda\lesssim \mu$ and $\lambda \gtrsim \mu$.
\end{notations}

\begin{notations}
For normed linear spaces $\Y$ and $\ZZ$, in this paper for convenience over $\R$, $\cL(\Y,\ZZ)$ will denote the space of bounded linear mappings $\Y \rightarrow \ZZ$ endowed with the operator norm $\|\cdot\|_{\cL(\Y,\ZZ)}$. The subset of invertible operators in $\cL(\Y,\ZZ)$  with inverses in $\cL(\ZZ,\Y)$
will be denoted as $\Lis(\Y,\ZZ)$.
The \emph{condition number} of a $C \in \Lis(\Y,\ZZ)$ is defined as $\kappa_{\Y,\ZZ}(C):=\|C\|_{\cL(\Y,\ZZ)}\|C^{-1}\|_{\cL(\ZZ,\Y)}$.

For $\Y$ a reflexive Banach space and $C \in \cL(\Y,\Y')$ being \emph{coercive}, i.e.,
\[
\inf_{0 \neq y \in \Y} \frac{(Cy)(y)}{\|y\|^2_\Y} >0,
\]
both $C$ and $\Re(C)\!:= \!\frac{1}{2}(C+C')$ are in $\Lis(\Y,\Y')$ with
\begin{align*}
\|\Re(C)\|_{\cL(\Y,\Y')} &\leq \|C\|_{\cL(\Y,\Y')},\\
\|C^{-1}\|_{\cL(\Y',\Y)} & \leq \|\Re(C)^{-1}\|_{\cL(\Y',\Y)}=\Big(\inf_{0 \neq y \in \Y} \frac{(Cy)(y)}{\|y\|^2_\Y}\Big)^{-1}.
\end{align*}
The set of coercive $C \in \Lis(\Y,\Y')$ is denoted as $\cLis(\Y,\Y')$.
If $C   \in \cLis(\Y,\Y')$, then $C^{-1} \in \cLis(\Y',\Y)$ and $\|\Re(C^{-1})^{-1}\|_{\cL(\Y,\Y')} \leq \|C\|_{\cL(\Y,\Y')}^2 \|\Re(C)^{-1}\|_{\cL(\Y',\Y)}$.

Given a family of operators $C_i \in \Lis(\Y_i,\ZZ_i)$ ($\cLis(\Y_i,\ZZ_i)$), we will write $C_i \in \Lis(\Y_i,\ZZ_i)$ ($\cLis(\Y_i,\ZZ_i)$) uniformly in $i$, or simply `uniform', when
    \[
    \sup_{i} \max(\|C_i\|_{\cL(\Y_i,\ZZ_i)},\|C_i^{-1}\|_{\cL(\ZZ_i,\Y_i)})<\infty,
     \]
or
    \[
    \sup_{i} \max(\|C_i\|_{\cL(\Y_i,\ZZ_i)},\|\Re(C_i)^{-1}\|_{\cL(\ZZ_i,\Y_i)})<\infty.
     \]
\end{notations}
 \begin{notations}
Given a finite collection $\Upsilon=\{\upsilon\}$ in a linear space, we set the \emph{synthesis operator}
\[
\cF_\Upsilon:\R^{\# \Upsilon} \rightarrow \Span\Upsilon\colon {\bf c} \mapsto {\bf c}^\top \Upsilon:=\sum_{\upsilon \in \Upsilon} c_\upsilon \upsilon.
\]
Equipping $\R^{\# \Upsilon}$ with the Euclidean scalar product $\langle \cdot, \cdot \rangle$, and identifying $(\R^{\# \Upsilon})'$ with~$\R^{\# \Upsilon}$ using the corresponding Riesz map, we infer that the adjoint of $\cF_\Upsilon$, known as the \emph{analysis operator}, satisfies
\[
\cF'_\Upsilon:(\Span\Upsilon)' \rightarrow \R^{\# \Upsilon}:f \mapsto f(\Upsilon):=[f(\upsilon)]_{\upsilon \in \Upsilon}.
\]
A collection $\Upsilon$ is a \emph{basis} for its span when $\cF_\Upsilon\in \Lis(\R^{\# \Upsilon},\Span\Upsilon)$ (and so $\cF'_\Upsilon \in \Lis((\Span\Upsilon)' ,\R^{\# \Upsilon})$.)

Two countable collections $\Upsilon=(\upsilon_i)_i$ and $\tilde{\Upsilon}=(\tilde{\upsilon}_i)_i$ in a Hilbert space will be called \emph{biorthogonal} when $\langle \Upsilon, \tilde{\Upsilon}\rangle=[\langle \upsilon_j, \tilde{\upsilon}_i\rangle]_{i j}$ is an \emph{invertible diagonal} matrix, and \emph{biorthonormal} when it is the \emph{identity} matrix.
\end{notations}

\section{Operator preconditioning}\label{sec:theory}
We shortly recap the idea of opposite order preconditioning,
which is based on the following result, see~\cite[Sect.~2]{138.26}.
\begin{proposition}\label{prop:opposite_preconditioner}
Let $\V, \W$ be reflexive Banach spaces.

If $B \in \Lis(\W,\W')$ and $D  \in \Lis(\V,\W')$, then
\[
G:=D^{-1} B (D')^{-1} \in \Lis(\V',\V),
\]
and
\begin{align*}
\|G\|_{\cL(\V',\V)} &\leq \|D^{-1}\|_{\cL(\W',\V)}^2 \|B\|_{\cL(\W,\W')},\\
\|G^{-1}\|_{\cL(\V,\V')} & \leq \|D\|_{\cL(\V,\W')}^2 \|B^{-1}\|_{\cL(\W',\W)}.
\end{align*}
If additionally $B \in \cLis(\W,\W')$, then $G \in \cLis(\V',\V)$, and
\[
\|\Re(G)^{-1}\|_{\cL(\V,\V')} \leq \|D\|_{\cL(\V,\W')}^2 \|\Re(B)^{-1}\|_{\cL(\W',\W)}.
\]
\end{proposition}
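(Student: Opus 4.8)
The plan is to verify each of the three assertions in order, exploiting that the hypotheses are symmetric under the duality maps and that composition of isomorphisms is an isomorphism. First, since $D \in \Lis(\V,\W')$, reflexivity of $\V$ and $\W$ gives that the adjoint $D' \in \Lis(\W'',\V') = \Lis(\W,\V')$, with $\|D'\|_{\cL(\W,\V')} = \|D\|_{\cL(\V,\W')}$ and, because $(D')^{-1} = (D^{-1})'$, also $\|(D')^{-1}\|_{\cL(\V',\W)} = \|D^{-1}\|_{\cL(\W',\V)}$. Hence $(D')^{-1} \colon \V' \to \W$, then $B \colon \W \to \W'$, then $D^{-1} \colon \W' \to \V$ compose to give $G = D^{-1} B (D')^{-1} \in \Lis(\V',\V)$, being a composition of three elements of the respective $\Lis$-classes.

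Next I would read off the two norm bounds by submultiplicativity of the operator norm along this composition: $\|G\|_{\cL(\V',\V)} \le \|D^{-1}\|_{\cL(\W',\V)}\,\|B\|_{\cL(\W,\W')}\,\|(D')^{-1}\|_{\cL(\V',\W)}$, which equals $\|D^{-1}\|_{\cL(\W',\V)}^2 \|B\|_{\cL(\W,\W')}$ by the identity of the first paragraph. The bound on $\|G^{-1}\|_{\cL(\V,\V')}$ is obtained identically from $G^{-1} = D' B^{-1} D$, using $\|D'\|_{\cL(\W,\V')} = \|D\|_{\cL(\V,\W')}$.

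For the coercive case, suppose in addition $B \in \cLis(\W,\W')$. The point is to transport coercivity of $B$ through conjugation by $(D')^{-1}$. For $0 \neq v \in \V'$, set $w := (D')^{-1} v \in \W$, which is nonzero and satisfies $\|w\|_\W \ge \|D'\|_{\cL(\W,\V')}^{-1}\|v\|_{\V'} = \|D\|_{\cL(\V,\W')}^{-1}\|v\|_{\V'}$. Then $(Gv)(v) = (D^{-1} B w)(v) = (B w)((D')^{-1} v) = (Bw)(w) \ge \big(\inf_{0 \neq y}\tfrac{(By)(y)}{\|y\|_\W^2}\big)\|w\|_\W^2 \ge \|\Re(B)^{-1}\|_{\cL(\W',\W)}^{-1}\,\|D\|_{\cL(\V,\W')}^{-2}\,\|v\|_{\V'}^2$, where the last step uses the identification of the coercivity constant with $\|\Re(B)^{-1}\|_{\cL(\W',\W)}^{-1}$ recorded in the Notations. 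This shows $G$ is coercive, hence $G \in \cLis(\V',\V)$, and rearranging gives exactly $\|\Re(G)^{-1}\|_{\cL(\V,\V')} \le \|D\|_{\cL(\V,\W')}^2\|\Re(B)^{-1}\|_{\cL(\W',\W)}$.

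The only genuinely delicate point — more bookkeeping than obstacle — is keeping the dualities straight: that $(D')^{-1}$ really maps $\V' \to \W$ (using reflexivity of $\W$ to identify $\W''$ with $\W$), and that the pairing manipulation $(D^{-1} Bw)(v) = (Bw)((D')^{-1}v)$ is precisely the defining property of the adjoint together with $v = D' w$. Everything else is submultiplicativity of norms and the elementary facts about coercive operators already stated in the Notations, so no separate lemma is needed.
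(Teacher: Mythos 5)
Your proof is correct. The paper itself does not prove this proposition but simply cites~\cite[Sect.~2]{138.26}; the argument you give --- composition of isomorphisms with submultiplicativity of the operator norm, then transporting coercivity through conjugation by $(D')^{-1}$ via the adjoint identity $(D'w)(u)=(Du)(w)$ and the characterization $\|\Re(B)^{-1}\|_{\cL(\W',\W)}^{-1}=\inf_{0\neq y}(By)(y)/\|y\|_\W^2$ recorded in the Notations --- is exactly the standard one from that reference, with the duality bookkeeping handled correctly.
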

Let be given families of finite dimensional spaces $\V_\tria$ for $\tria \in \bbT$, and operators $A_\tria \in \Lis(\V_{\tria},\V_{\tria}')$ uniformly in $\tria \in \bbT$.
Then in light of Proposition~\ref{prop:opposite_preconditioner} we will seek preconditioners for $A_\tria$ of the form
\begin{equation}\label{eq:preconditioner}
    G_\tria=D_\tria^{-1} B_\tria (D_\tria')^{-1},
\end{equation}
where $B_\tria \in \Lis(\W_\tria,\W_\tria')$ and $D_\tria  \in \Lis(\V_\tria,\W_\tria')$ (both uniformly in $\tria \in \bbT$), and
\begin{equation}\label{eq:equaldims}
\dim \W_\tria=\dim \V_\tria.
\end{equation}

A typical situation is that for some reflexive Banach space $\V$ and $A \in \cLis(\V,\V')$,  it holds that
$\V_\tria \subset \V$ (thus \emph{equipped with~$\|\cdot\|_\V$}) and $(A_\tria u)(v):=(A u)(v)$ ($u,v \in \V_\tria$), so that indeed
$A_\tria \in \cLis(\V_{\tria},\V_{\tria}')$ uniformly in $\tria \in \bbT$.
Then for a suitable reflexive Banach space $\W$, an operator $B \in \cLis(\W, \W')$, and a subspace $\W_\tria \subset \W$ (thus \emph{equipped with~$\|\cdot\|_\W$}), one
can take $(B_\tria w)(z):=(B w)(z)$ ($w,z \in \W_\tria$), giving $B_\tria \in \cLis(\W_\tria, \W_\tria')$ uniformly.
A possible construction of $D_\tria \in \Lis(\V_\tria, \W_\tria')$ uniformly is discussed in the next proposition.

\begin{proposition}[Fortin projector (\cite{75.22})]\label{prop:fortin}
For some $D  \in \Lis(\V,\W')$, let $D_\tria \in \cL(\V_\tria,\W_\tria')$ be defined by
$(D_\tria v)(w):=(D v)(w)$. Then
\begin{align} \nonumber
\|D_\tria\|_{\cL(\V_\tria,\W_\tria')} &\leq  \|D\|_{\cL(\V,\W')}.
\intertext{Assuming \eqref{eq:equaldims}, additionally one has $D_\tria \in \Lis(\V_\tria,\W_\tria')$ \emph{if}, and for $\W$ being a Hilbert space, \emph{only if} there exists a projector $P_\tria \in \cL(\W,\W)$ onto $\W_\tria$ with \mbox{$(D \V_\tria )((\identity-P_\tria)\W)=0$,} in which case}
\label{bound}
\|D_\tria^{-1}\|_{\cL(\W_\tria',\V_\tria)} &\leq \|P_\tria\|_{\cL(\W,\W)} \|D^{-1}\|_{\cL(\W',\V)}.
\end{align}
\end{proposition}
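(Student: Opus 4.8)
The plan is to prove the two parts of Proposition~\ref{prop:fortin} separately, starting with the norm bound on $D_\tria$, which is immediate, and then addressing the equivalence for invertibility together with the bound~\eqref{bound}.

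\textbf{Norm bound on $D_\tria$.} For $v \in \V_\tria$ and $w \in \W_\tria$ we have $(D_\tria v)(w) = (Dv)(w)$, and since $\V_\tria$ carries the norm of $\V$ and $\W_\tria$ carries the norm of $\W$, we get $|(D_\tria v)(w)| \leq \|D\|_{\cL(\V,\W')}\|v\|_\V \|w\|_\W$. Taking the supremum over $w \in \W_\tria$ with $\|w\|_\W \leq 1$ gives $\|D_\tria v\|_{\W_\tria'} \leq \|D\|_{\cL(\V,\W')}\|v\|_\V$, hence the first inequality.

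\textbf{Sufficiency of a Fortin projector.} Suppose $P_\tria \in \cL(\W,\W)$ is a projector onto $\W_\tria$ with $(D\V_\tria)((\identity - P_\tria)\W) = 0$. First I would show $D_\tria$ is injective: since $\dim \W_\tria = \dim \V_\tria < \infty$ by~\eqref{eq:equaldims}, injectivity will give bijectivity. Let $v \in \V_\tria$ with $D_\tria v = 0$, i.e.\ $(Dv)(w) = 0$ for all $w \in \W_\tria$. For arbitrary $z \in \W$, write $z = P_\tria z + (\identity - P_\tria)z$; then $(Dv)(z) = (Dv)(P_\tria z) + (Dv)((\identity-P_\tria)z) = 0 + 0 = 0$, using $P_\tria z \in \W_\tria$ and the vanishing hypothesis. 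Hence $Dv = 0$ in $\W'$, and since $D \in \Lis(\V,\W')$ is injective, $v = 0$. This proves $D_\tria \in \Lis(\V_\tria,\W_\tria')$. For the bound~\eqref{bound}: given $f \in \W_\tria'$, set $v := D_\tria^{-1}f \in \V_\tria$, so $(Dv)(w) = f(w)$ for $w \in \W_\tria$. Then for any $z \in \W$, the same splitting gives $(Dv)(z) = (Dv)(P_\tria z) = f(P_\tria z)$, so $|(Dv)(z)| \leq \|f\|_{\W_\tria'}\|P_\tria z\|_\W \leq \|f\|_{\W_\tria'}\|P_\tria\|_{\cL(\W,\W)}\|z\|_\W$, i.e.\ $\|Dv\|_{\W'} \leq \|P_\tria\|_{\cL(\W,\W)}\|f\|_{\W_\tria'}$. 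Applying $\|D^{-1}\|_{\cL(\W',\V)}$ yields $\|v\|_\V = \|D^{-1}Dv\|_\V \leq \|D^{-1}\|_{\cL(\W',\V)}\|P_\tria\|_{\cL(\W,\W)}\|f\|_{\W_\tria'}$, which is exactly~\eqref{bound}.

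\textbf{Necessity when $\W$ is a Hilbert space.} Now assume $\W$ is Hilbert and $D_\tria \in \Lis(\V_\tria,\W_\tria')$; I must produce the projector $P_\tria$. The natural candidate is $P_\tria := \cF_{\V_\tria \text{-image}} \circ (\text{something}) \circ D$-type construction; concretely, define $P_\tria := \iota \circ D_\tria^{-1} \circ D$ where $\iota$ encodes the identification, but one must be careful that the target of $D$ is $\W'$ while $D_\tria^{-1}$ acts on $\W_\tria'$. The clean route: let $R_\tria \colon \W' \to \W_\tria'$ be the restriction map $f \mapsto f|_{\W_\tria}$, and set $Q_\tria := D_\tria^{-1} R_\tria D \in \cL(\W, \V_\tria)$; this is not yet a projector on $\W$. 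Instead, using the Hilbert structure of $\W$, identify $\W'$ with $\W$ via the Riesz map $J$, and consider $P_\tria := J\big(D(D_\tria^{-1}(R_\tria D \cdot)))\big)$ — I would instead argue more invariantly: $D\V_\tria$ is a $\dim\V_\tria$-dimensional subspace of $\W'$, which under the Riesz isometry corresponds to a subspace $M_\tria \subset \W$ of the same dimension; define $P_\tria$ to be the composition of $D$ with $D_\tria^{-1}$ with the restriction, landing in $\V_\tria$, then mapped back — the key point being that $P_\tria$ should fix $\W_\tria$ pointwise and have $D\V_\tria$ annihilate its complementary range. The main obstacle, and where I would spend the most care, is exactly this construction: verifying that the candidate $P_\tria$ is (i) bounded, (ii) idempotent with range $\W_\tria$, and (iii) satisfies $(D\V_\tria)((\identity-P_\tria)\W) = 0$; the Hilbert-space hypothesis is needed precisely to split $\W = \W_\tria \oplus (D\V_\tria)^{\perp_J}$ (orthogonal complement of the Riesz preimage of $D\V_\tria$) and to see these two spaces are complementary, which uses $\dim\W_\tria = \dim D\V_\tria$ and the invertibility of $D_\tria$ to rule out a nontrivial intersection. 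Once that decomposition is in hand, $P_\tria$ is the projection onto $\W_\tria$ along $(D\V_\tria)^{\perp_J}$, and property (iii) is automatic by construction.
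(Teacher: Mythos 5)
The paper gives no proof of this proposition---it is attributed to \cite{75.22}---so there is no in-text argument to compare against. Your treatment of the norm bound and of the sufficiency direction (including the estimate \eqref{bound}) is correct and complete: injectivity of $D_\tria$ together with $\dim\W_\tria=\dim\V_\tria<\infty$ yields bijectivity, and the bound follows exactly as you derive it.

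The necessity direction, by contrast, is left as a sketch rather than a proof. You do identify the right candidate---the projection $P_\tria$ onto $\W_\tria$ along $M_\tria^\perp$, with $M_\tria := J(D\V_\tria)\subset\W$ and $J\colon\W'\to\W$ the Riesz isometry---and you list the three properties (i) boundedness, (ii) idempotency with range $\W_\tria$, (iii) the Fortin condition, but you verify none of them. To finish: $M_\tria^\perp$ is closed of finite codimension $n:=\dim\V_\tria$, and $\W_\tria$ is $n$-dimensional. If $w\in\W_\tria\cap M_\tria^\perp$, then $(D_\tria v)(w)=(Dv)(w)=\langle J(Dv),w\rangle_\W=0$ for all $v\in\V_\tria$, and surjectivity of $D_\tria$ onto $\W_\tria'$ forces $w=0$; an $n$-dimensional subspace meeting a closed subspace of codimension $n$ only trivially is a topological complement of it (the quotient map restricted to $\W_\tria$ is a bijection onto $\W/M_\tria^\perp$), so $\W=\W_\tria\oplus M_\tria^\perp$ and the associated projector $P_\tria$ is bounded by the closed graph theorem, both summands being closed---this is (i). Property (ii) holds by construction, and (iii) follows since $\ran(\identity-P_\tria)=M_\tria^\perp$ and $(Dv)(z)=\langle J(Dv),z\rangle_\W=0$ for $z\in M_\tria^\perp$, $v\in\V_\tria$. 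These few lines are what separates your plan from a proof.
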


In our applications, the choices for $\W$ and $D$ will be obvious, and the key ingredient for the construction of a uniform preconditioner $G_\tria$ will be the selection of $\W_\tria$ that allows for a uniformly bounded Fortin projector $P_\tria$.

\subsection{Implementation}\label{sec:theory_impl}
Let $\Phi_\tria=(\phi_i)_i$ and $\Psi_\tria=(\psi_i)_i$ be bases for $\V_\tria$ and $\W_\tria$, respectively.
Then in coordinates the preconditioned system reads as
\[
\cF_{\Phi_\tria}^{-1} G_\tria A_\tria \cF_{\Phi_\tria}= \bm{G}_\tria \bm{A}_\tria:=\bm{D}_\tria^{-1} \bm{B}_\tria \bm{D}_\tria^{-\top} \bm{A}_\tria,
\]
where
\[
\bm{A}_\tria:=\cF'_{\Phi_\tria} A_\tria \cF_{\Phi_\tria},\quad
\bm{B}_\tria:=\cF'_{\Psi_\tria} B_\tria \cF_{\Psi_\tria},\quad
\bm{D}_\tria:=\cF'_{\Psi_\tria} D_\tria \cF_{\Phi_\tria}.
\]

By identifying a map in $\cL(\R^{\# \Phi_\tria},\R^{\# \Phi_\tria})$ with a ${\# \Phi_\tria} \times {\# \Phi_\tria}$ matrix by equipping~$\R^{\# \Phi_\tria}$ with the canonical basis $(\bm{e}_i)_i$ one has,
\[
(\bm{A}_\tria)_{ij}=\langle \cF'_{\Phi_\tria} A_\tria \cF_{\Phi_\tria} \bm{e}_{j}, \bm{e}_{i}\rangle=(A_\tria \cF_{\Phi_\tria}  \bm{e}_{j})(\cF_{\Phi_\tria}  \bm{e}_{i})=(A_\tria  \phi_{j})(\phi_{i}),
\]
and similarly,
\[
(\bm{B}_\tria)_{ij}=(B_\tria\psi_{j})(\psi_{i}), \quad (\bm{D}_\tria)_{ij}=(D_\tria \phi_{j})(\psi_{i}).
\]
Preferably $\bm{D}_\tria$ is such that its inverse can be applied in linear complexity, as is the case when $\bm{D}_\tria$ is \emph{diagonal}.
A goal of this paper is to construct such a diagonal $\bm{D}_\tria$.
\begin{remark}
Using $\sigma(\cdot)$ and $\rho(\cdot)$ to denote the spectrum and spectral radius of an operator,
clearly $\sigma(\bm{G}_\tria \bm{A}_\tria)=\sigma(G_\tria   A_\tria)$.  So for the spectral condition number we have
\[
\kappa_S(\bm{G}_\tria \bm{A}_\tria):=\rho(\bm{G}_\tria \bm{A}_\tria)\rho((\bm{G}_\tria \bm{A}_\tria)^{-1}) \leq \kappa_{\V_\tria,\V_\tria}(G_\tria A_\tria),
\]
which thus holds true \emph{independently} of the choice of the basis $\Phi_\tria$ for $\V_\tria$.
Furthermore, in view of an application of Conjugate Gradients, if $A_\tria$ and $B_\tria$ are coercive and \emph{self-adjoint}, then $\bm{A}_\tria$ and $\bm{G}_\tria$ are positive definite and symmetric.
Equipping $\R^{\dim \V_\tria}$ with $\lnrm\cdot\rnrm:=\|(\bm{G}_\tria)^{-\frac{1}{2}}\cdot\|$ or $\lnrm\cdot\rnrm:=\|(\bm{A}_\tria)^{\frac{1}{2}}\cdot\|$, in that case we have
\[
\kappa_{(\R^{\dim \V_\tria},\lnrm\cdot\rnrm),(\R^{\dim \V_\tria},\lnrm\cdot\rnrm)}(\bm{G}_\tria \bm{A}_\tria)=\kappa_S(\bm{G}_\tria \bm{A}_\tria)
.
\]
\end{remark}

\section{Application to operators discretized with continuous piecewise linears}\label{sec:preconditioning_cont_pw_lin}
For a bounded polytopal domain $\Omega \subset \R^d$, a measurable, closed, possibly empty $\gamma \subset \partial\Omega$, and an $s \in [0,1]$, we take
\[
    \V:={[L_2(\Omega),H^1_{0,\gamma}(\Omega)]}_{s,2},\quad \W:=\V',
\]
which forms the Gelfand triple $\V \hookrightarrow L_2(\Omega) \simeq L_2(\Omega)' \hookrightarrow \W$. We define the operator $D \in \Lis(\V, \W')$ as the unique extension
to $\V \times \W$ of the duality pairing
\[
 (Dv)(w) := \langle v,w \rangle_{L_2(\Omega)},
\] which satisfies $\|D\|_{\cL(\V,\W')}=\|D^{-1}\|_{\cL(\W',\V)}=1$.

Let $(\tria)_{\tria \in \bbT}$ be a family of conforming partitions of $\Omega$ into closed uniformly shape regular $d$-simplices.
Thanks to the conformity and the uniform shape regularity, for $d>1$ we know that neighbouring $T,T' \in \tria$, i.e.~$T \cap T' \neq \emptyset$, have uniformly comparable sizes.
 For $d=1$, we impose this uniform `\emph{$K$-mesh property}' explicitly.

For some $\tria \in \bbT$, denote $N^0_\tria$ as the subset of vertices that are not on~$\gamma$, where we assume that $\gamma$ is the (possibly empty) union of $(d-1)$-faces of $T \in \tria$.
For $T \in \tria$, write $N_T$ for the set of its vertices, set $N^0_T := N^0_\tria \cap N_T$, $h_T := |T|^{1/d}$, and the piecewise constant function $h_\tria$ by $h_\tria|_T=h_T$ ($T \in \tria$).
For any vertex $\nu \in N^0_\tria$, define the patch $\omega_{\tria, \nu} := \bigcup_{\{T \in \tria : \nu \in T\}} T$ and the local mesh size $h_{\tria, \nu}:= |\omega_{\tria, \nu}|^{1/d}$.
We omit notational dependence on $\tria$ if it is clear from the context, and simply write $\omega_\trianu$ and $h_\trianu$.

Let the discretization space $\V_\tria$ be the space of continuous piecewise linears, zero on $\gamma$,
\[
  \V_\tria = \Ss^{0,1}_\tria:=\{u \in H^1_{0,\gamma}(\Omega)\colon u|_T \in \cP_1 \,(T \in \tria)\} \subset \V,
\]
equipped with the nodal bases
\[
\Phi_\tria = \{\phi_{\trianu}\colon \nu \in N^0_\tria\}
\]
defined by $\phi_{\trianu}(\nu') := \delta_{\nu\nu'}$ ($\nu, \nu' \in N^0_\tria$).
For future reference, define the space of discontinuous piecewise constants by
\[
\Ss_\tria^{-1,0} :=\{u \in L_2(\Omega)\colon u|_T \in \cP_0 \,(T \in \tria)\} \subset \W,
\]
equipped with the basis
\begin{equation}\label{sigma}
\Sigma_\tria:=\{\1_T\colon T \in \tria\},
\end{equation}
where $\1_K$ is defined by, for any $K \subseteq \Omega$,
    $\1_K := 1$ on $K$, and $\1_K := 0$ elsewhere.

\subsection{The subspace $\W_\tria$}\label{sec:assumptions_w}
We will construct the preconditioning space $\W_\tria$ as
\[
    \W_\tria := \Span \Psi_\tria \subset \W, \,\, \text{with}\,\, \dim \W_\tria = \dim \V_\tria
\]
for a collection $\Psi_\tria \subset L_2(\Omega)$ that is biorthogonal to $\Phi_\tria$,
and for which the biorthogonal projector $P_\tria \in \cL(\W,\W)$ onto $\W_\tria$ is uniformly bounded.
We require the collection $\Psi_\tria := \{ \psi_\trianu \in \W \colon \nu \in N_\tria^0\}$ to satisfy
\begin{equation}
    \label{eq:assumption_psi}
    \begin{aligned}
       \big| \langle \phi_\trianu, \psi_{\trianu'} \rangle_{L_2(\Omega)} \big| &\eqsim \delta_{\nu \nu'} \| \phi_\trianu \|_{L_2(\Omega)} \| \psi_{\trianu'} \|_{L_2(\Omega)} \quad (\nu, \nu' \in N_\tria^0), \\
        \supp \psi_{\trianu} &\subseteq \omega_{\trianu} \quad (\nu \in N_\tria^0).
    \end{aligned}
\end{equation}
Existence of such collections will be shown later in Sect.~\ref{sec:decompose_b}.

\subsection{Bounded Fortin projector}
From the assumptions \eqref{eq:assumption_psi} it follows that the biorthogonal Fortin projector $P_\tria \colon H_{0,\gamma}^1(\Omega)' \to L_2(\Omega)$ onto~$\W_\tria$ with $\ran (I-P_\tria) = \V_\tria^{\perp_{L_2(\Omega)}}$ exists, and is given by
\[
    P_\tria u = \sum_{ \nu \in N^0_\tria} \frac{ \langle u, \phi_{\trianu} \rangle_{L_2(\Omega)}} { \langle \phi_{\trianu}, \psi_{\trianu} \rangle_{L_2(\Omega)}} \psi_{\trianu}.
\]
Uniform boundedness of $\|P_\tria\|_{\cL(\W,\W)}$ follows from uniform boundedness
of its adjoint $P_\tria'$, which can be shown similarly as in~\cite[Thm.~3.2]{249.97}\footnote{Note that the roles of $\V$ and $\W$ are interchanged compared to~\cite{249.97}.}:
\begin{theorem}\label{thm:boundedbiorth}
    It holds that $\sup_{\tria \in \bbT} \| P_\tria\|_{\cL(\W, \W)} = \sup_{\tria \in \bbT} \| P_\tria'\|_{\cL(\V,\V)} < \infty$.
\end{theorem}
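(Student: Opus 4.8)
The plan is to establish the two claims in turn: first the identity $\|P_\tria\|_{\cL(\W,\W)} = \|P_\tria'\|_{\cL(\V,\V)}$, and then the uniform boundedness. The identity is immediate: $P_\tria \in \cL(\W,\W)$ is a bounded operator on the reflexive Banach space $\W = \V'$, so its adjoint $P_\tria' \in \cL(\W',\W'') = \cL(\V,\V)$ (using reflexivity $\W' = \V'' = \V$ after the canonical identifications in the Gelfand triple), and $\|P_\tria\|_{\cL(\W,\W)} = \|P_\tria'\|_{\cL(\V,\V)}$ by the standard fact that an operator and its adjoint have equal norms. One should double-check that $P_\tria'$ is indeed the \emph{biorthogonal} projector onto $\V_\tria$, i.e.\ that $P_\tria' \in \cL(\V,\V)$ has range $\V_\tria$ and $\ran(I - P_\tria') = \W_\tria^{\perp}$: this follows by dualizing the formula for $P_\tria$, giving
\[
P_\tria' v = \sum_{\nu \in N^0_\tria} \frac{\langle v, \psi_{\trianu}\rangle_{L_2(\Omega)}}{\langle \phi_{\trianu},\psi_{\trianu}\rangle_{L_2(\Omega)}}\, \phi_{\trianu},
\]
which is manifestly a projector onto $\V_\tria$.

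The substance is the uniform bound $\sup_{\tria} \|P_\tria'\|_{\cL(\V,\V)} < \infty$, and the strategy is to adapt the argument of \cite[Thm.~3.2]{249.97} with the roles of $\V$ and $\W$ interchanged. Since $\V = [L_2(\Omega), H^1_{0,\gamma}(\Omega)]_{s,2}$ is an interpolation space and $P_\tria'$ is a projector, by operator interpolation it suffices to bound $P_\tria'$ uniformly on the two endpoint spaces $L_2(\Omega)$ and $H^1_{0,\gamma}(\Omega)$ — indeed, a uniform bound on $\cL(L_2,L_2)$ and on $\cL(H^1_{0,\gamma},H^1_{0,\gamma})$ yields, via $\|P_\tria'\|_{\cL(\V,\V)} \lesssim \|P_\tria'\|_{\cL(L_2,L_2)}^{1-s}\|P_\tria'\|_{\cL(H^1_{0,\gamma},H^1_{0,\gamma})}^{s}$, the claim for every $s \in [0,1]$. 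For the $L_2$-bound, I would argue locally: the biorthogonality and support conditions \eqref{eq:assumption_psi} give, for each $\nu$, $|\langle \phi_{\trianu},\psi_{\trianu}\rangle| \eqsim \|\phi_{\trianu}\|_{L_2}\|\psi_{\trianu}\|_{L_2}$ with support in $\omega_{\trianu}$, so each term $\frac{\langle v,\psi_{\trianu}\rangle}{\langle\phi_{\trianu},\psi_{\trianu}\rangle}\phi_{\trianu}$ is bounded in $L_2(\omega_{\trianu})$ by $\|v\|_{L_2(\omega_{\trianu})}$ up to a constant (Cauchy--Schwarz plus $\|\phi_{\trianu}\|_{L_2}/\|\psi_{\trianu}\|_{L_2}$ cancellation). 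Then a finite-overlap argument — each point of $\Omega$ lies in a uniformly bounded number of patches $\omega_{\trianu}$, a consequence of uniform shape regularity and the $K$-mesh property — sums these local estimates to $\|P_\tria' v\|_{L_2(\Omega)} \lesssim \|v\|_{L_2(\Omega)}$. For the $H^1$-bound one combines this with a local inverse inequality on $\V_\tria$ (converting the extra derivative on $\phi_{\trianu}$ into a factor $h_{\trianu}^{-1}$), a Poincaré-type estimate exploiting that $P_\tria'$ reproduces constants appropriately (or rather, that $v - (\text{local average})$ can be inserted since $\psi_{\trianu}$-terms involving a locally constant $v$ are controlled), and again finite overlap; this is the standard route for showing $H^1$-stability of quasi-interpolants and is where the shape-regularity hypotheses are really used.

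The main obstacle I anticipate is the $H^1(\Omega)$-stability estimate: to get a bound \emph{independent} of the mesh one cannot simply use $\|\phi_{\trianu}\|_{H^1} \sim h_{\trianu}^{d/2 - 1}$ naively against $\|v\|_{H^1}$, since that loses a power of $h_{\trianu}$. One must instead subtract a suitable piecewise-polynomial (or locally-constant) approximation of $v$ on each patch before applying the inverse inequality, using that $P_\tria'$ leaves $\V_\tria$ invariant and a Bramble--Hilbert/Poincaré bound $\inf_{c}\|v-c\|_{L_2(\omega_{\trianu})} \lesssim h_{\trianu}\|\grad v\|_{L_2(\omega_{\trianu})}$. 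Controlling the interaction of overlapping patches while keeping the constant uniform (so that the $K$-mesh property is genuinely what saves us in $d=1$ and shape-regularity in $d>1$) is the delicate bookkeeping; everything else is routine. Having both endpoint bounds, interpolation closes the argument, and combined with the norm-preservation under adjunction from the first paragraph we conclude $\sup_{\tria \in \bbT}\|P_\tria\|_{\cL(\W,\W)} = \sup_{\tria \in \bbT}\|P_\tria'\|_{\cL(\V,\V)} < \infty$.
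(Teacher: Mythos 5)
Your proposal follows essentially the same route as the paper: reduce to $L_2$ and $H^1_{0,\gamma}$ endpoint bounds via interpolation, prove the $L_2$ bound by the local-support-plus-finite-overlap argument using \eqref{eq:assumption_psi}, and prove the $H^1$ bound by subtracting a piecewise-linear approximant before invoking an inverse inequality. The paper implements your "subtract a suitable piecewise-polynomial approximation" step concretely via the Scott--Zhang interpolator $\Pi_\tria$ and the identity $P_\tria' u = \Pi_\tria u + P'_\tria(\identity - \Pi_\tria)u$; this is worth noting because Scott--Zhang (rather than local averages, as your parenthetical "$v-(\text{local average})$" suggests) is what correctly handles patches meeting $\gamma$, where $P_\tria'$ does \emph{not} reproduce constants and a Friedrichs-type rather than Poincaré-type estimate is needed.
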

\begin{proof}
    Let $\tria \in \bbT$. Define $\omega^{(0)}_T:=T$ for $T \in \tria$, and for $i=1,\ldots$, denote $\omega^{(i)}_T:=\bigcup_{\{T' \in \tria \colon T' \cap \omega_T^{(i-1)} \ne \emptyset\}} T'$. The adjoint $P_\tria'\colon L_2(\Omega) \to H_{0,\gamma}^1(\Omega)$ onto $\V_\tria$ is given by
    \[
        P_\tria' u = \sum_{\nu \in N^0_\tria} \frac{ \langle u, \psi_{\trianu} \rangle_{L_2(\Omega)}}{ \langle \phi_{\trianu}, \psi_{\trianu} \rangle_{L_2(\Omega)}} \phi_{\trianu}.
    \]
    Properties of the nodal basis functions, $\|\phi_\trianu\|^2_{L_2(\Omega)} \eqsim h_\trianu^{d}$ and $\|\phi_\trianu\|^2_{H^1(\Omega)} \lesssim h_\trianu^{d - 2}$, in combination with~\eqref{eq:assumption_psi}, can be used to show that, for $T \in \tria$ and $k \in \{0,1\}$,
    \begin{equation}
        \begin{aligned}
         \label{eq:proj_l2_bound}
        \| P_\tria' u\|_{H^k(T)} &\leq \sum_{\nu \in N^0_T} \| \phi_{\trianu} \|_{H^k(T)}
        \frac{\|u\|_{L_2(\supp \psi_\trianu)} \| \psi_{\trianu}\|_{L_2(\Omega)}}{| \langle \phi_{\trianu}, \psi_{\trianu}\rangle_{L_2(\Omega)}| }\\
        &\lesssim \sum_{\nu \in N^0_T} h_{\trianu}^{-k} \|u\|_{L_2(\supp \psi_\trianu)} \lesssim h_T^{-k} \|u\|_{L_2(\omega^{(1)}_T)},
    \end{aligned}
    \end{equation}
    from which we may directly conclude that
    \[
        \sup_{\tria \in \bbT} \| P_\tria' \|_{\cL(L_2(\Omega), L_2(\Omega))} < \infty.
    \]

    For proving boundedness in $H^1_{0,\gamma}(\Omega)$,
    we consider the Scott-Zhang (\cite{247.2}) interpolator $\Pi_\tria: H_{0,\gamma}^1(\Omega) \to \V_\tria$.
    From~\eqref{eq:proj_l2_bound} and properties of the $\Pi_\tria$~\cite[(3.8) and (4.3)]{247.2}, we deduce that
    \begin{align*}
        \| P_\tria' u\|_{H^1(T)} &= \| \Pi_\tria u  + P'_\tria(\identity - \Pi_\tria)u\|_{H^1(T)} \\
                                  &\lesssim \| u\|_{H^1(\omega_\tria^{(1)}(T))} +h_T^{-1} \| (\identity - \Pi_\tria)u\|_{L_2(\omega_\tria^{(1)}(T))} \\
                                  &\lesssim \| u\|_{H^1(\omega_\tria^{(2)}(T))},
    \end{align*}
    and consequently
    \[
        \sup_{\tria \in \bbT} \| P_\tria'\|_{\cL(H_{0,\gamma}^1(\Omega), H_{0,\gamma}^1(\Omega))} < \infty.
    \]
    An application of the Riesz-Thorin interpolation theorem yields the result.
\end{proof}
The basis $\Psi_\tria$ has the crucial benefit that the matrix representation of $D_\tria$, i.e.
\[
\bm{D}_\tria = \langle \Phi_\tria, \Psi_\tria \rangle_{L_2(\Omega)},
\]
is \emph{diagonal}, and thus easily invertible, cf.\ Sect.~\ref{sec:theory_impl}.

Combining the theorem with Proposition~\ref{prop:fortin} gives the following corollary (without requiring additional
assumptions on the family of partitions $\bbT$).
\begin{corollary}\label{cor:uniform_preconditioner}
    Suppose we have $B_\tria \in \cLis(\W_\tria,\W_\tria')$ uniformly.
    With $D_\tria\colon \V_\tria \to \W_\tria$ defined by $(D_\tria v)(w) := \langle v, w \rangle_{L_2(\Omega)}$, we find
    that $G_\tria = D^{-1}_\tria B_\tria {(D'_\tria)}^{-1} \in \cLis(\V_\tria', \V_\tria)$ is a uniform preconditioner of $A_\tria \in \cLis(\V_\tria, \V_\tria')$.
\end{corollary}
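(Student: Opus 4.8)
The plan is to verify the hypotheses of Proposition~\ref{prop:opposite_preconditioner} for the triple $(\V_\tria, \W_\tria, D_\tria)$, treating $\W_\tria$ as a closed subspace of $\W$ equipped with $\|\cdot\|_\W$, and $\V_\tria$ as a subspace of $\V$ equipped with $\|\cdot\|_\V$. The only nontrivial ingredient is that $D_\tria \in \Lis(\V_\tria, \W_\tria')$ \emph{uniformly} in $\tria \in \bbT$; once this is in hand, Proposition~\ref{prop:opposite_preconditioner} applied with $B = B_\tria$ (which is assumed coercive and uniformly boundedly invertible) immediately yields $G_\tria = D_\tria^{-1} B_\tria (D_\tria')^{-1} \in \cLis(\V_\tria', \V_\tria)$ together with the norm bounds
\[
\|G_\tria\|_{\cL(\V_\tria',\V_\tria)} \leq \|D_\tria^{-1}\|^2_{\cL(\W_\tria',\V_\tria)} \|B_\tria\|_{\cL(\W_\tria,\W_\tria')}, \qquad
\|\Re(G_\tria)^{-1}\|_{\cL(\V_\tria,\V_\tria')} \leq \|D_\tria\|^2_{\cL(\V_\tria,\W_\tria')} \|\Re(B_\tria)^{-1}\|_{\cL(\W_\tria',\W_\tria)},
\]
and all three factors on the right are bounded uniformly in $\tria$. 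Finally, since $(G_\tria A_\tria u)(v)$ makes $G_\tria A_\tria \colon \V_\tria \to \V_\tria$ uniformly boundedly invertible with uniformly bounded $\Re(\cdot)^{-1}$, this is exactly the assertion that $G_\tria$ is a uniform preconditioner of $A_\tria$.

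The bound $\|D_\tria\|_{\cL(\V_\tria,\W_\tria')} \leq \|D\|_{\cL(\V,\W')} = 1$ is the first (elementary) half of Proposition~\ref{prop:fortin}, since $(D_\tria v)(w) = (Dv)(w)$ by definition and $\V_\tria \subset \V$, $\W_\tria \subset \W$. For the lower bound I would invoke the second half of Proposition~\ref{prop:fortin}: because $\dim \W_\tria = \dim \V_\tria$ by construction (Sect.~\ref{sec:assumptions_w}), it suffices to exhibit a projector $P_\tria \in \cL(\W,\W)$ onto $\W_\tria$ with $(D\V_\tria)((\identity - P_\tria)\W) = 0$ and with $\|P_\tria\|_{\cL(\W,\W)}$ bounded uniformly in $\tria$. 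But precisely such a $P_\tria$ — the biorthogonal Fortin projector $P_\tria u = \sum_{\nu \in N^0_\tria} \frac{\langle u, \phi_\trianu\rangle_{L_2(\Omega)}}{\langle \phi_\trianu, \psi_\trianu\rangle_{L_2(\Omega)}} \psi_\trianu$ — has just been constructed from the biorthogonality and support hypotheses~\eqref{eq:assumption_psi}, it satisfies $\ran(\identity - P_\tria) = \V_\tria^{\perp_{L_2(\Omega)}}$ so that indeed $\langle v, (\identity-P_\tria)w\rangle_{L_2(\Omega)} = 0$ for all $v \in \V_\tria$, $w \in \W$, and Theorem~\ref{thm:boundedbiorth} gives $\sup_{\tria \in \bbT}\|P_\tria\|_{\cL(\W,\W)} < \infty$. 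Hence \eqref{bound} yields $\|D_\tria^{-1}\|_{\cL(\W_\tria',\V_\tria)} \leq \|P_\tria\|_{\cL(\W,\W)}\|D^{-1}\|_{\cL(\W',\V)} = \|P_\tria\|_{\cL(\W,\W)}$, uniformly bounded.

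There is no real obstacle here: the corollary is essentially a bookkeeping step that chains together Proposition~\ref{prop:opposite_preconditioner}, Proposition~\ref{prop:fortin}, and Theorem~\ref{thm:boundedbiorth}. The one point that warrants a careful line is that the \emph{reflexivity} hypothesis needed in Proposition~\ref{prop:opposite_preconditioner} applies to the finite-dimensional spaces $\V_\tria$, $\W_\tria$, which are trivially reflexive; and that the coercivity/self-adjointness structure is inherited by $G_\tria$ from $B_\tria$ exactly as in the last assertion of Proposition~\ref{prop:opposite_preconditioner}. The genuinely substantive work — verifying~\eqref{eq:assumption_psi} can be met and that $P_\tria$ is uniformly bounded — has already been done (the latter in Theorem~\ref{thm:boundedbiorth}, the former deferred to Sect.~\ref{sec:decompose_b}), so the proof of the corollary itself is short.
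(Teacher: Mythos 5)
Your proof is correct and follows exactly the route the paper intends (the paper simply remarks that the corollary follows by ``combining the theorem with Proposition~\ref{prop:fortin}''): you apply Proposition~\ref{prop:fortin}, using the biorthogonal projector $P_\tria$ and Theorem~\ref{thm:boundedbiorth} to get $D_\tria \in \Lis(\V_\tria,\W_\tria')$ uniformly, and then invoke Proposition~\ref{prop:opposite_preconditioner} to conclude $G_\tria \in \cLis(\V_\tria',\V_\tria)$ uniformly. Your attention to the detail that $\ran(\identity-P_\tria)=\V_\tria^{\perp_{L_2(\Omega)}}$ is exactly what makes $(D\V_\tria)((\identity-P_\tria)\W)=0$, and the remark on reflexivity and $\dim\W_\tria=\dim\V_\tria$ correctly closes the remaining hypotheses.
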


Given some $B \in \cLis(\W, \W')$, a possible choice for $B_\tria  \in \cLis(\W_\tria, \W_\tria')$ uniformly in $\tria \in \bbT$,
is $(B_\tria  u)(v):=(B u)(v)$ ($u,v \in \W_\tria$). For $d \in \{2,3\}$ and $\W'=\V=H^{\frac{1}{2}}(\Omega)$, a suitable $B$ is given by the Weakly Singular Integral operator, whereas for
$\W'=\V=H^{\frac{1}{2}}_{00}(\Omega):=[L_2(\Omega),H^1_0(\Omega)]_{\frac{1}{2},2}$, the
recently in~\cite{138.28}  introduced \emph{Modified} Weakly Singular Integral operator can be applied. Similar comments apply to screens.

\section{Construction of $B_\tria \in \cLis(\W_\tria, \W_\tria')$}\label{sec:decompose_b}
We expect it to be impossible to construct a basis $\Psi_\tria$ in the
(standard) spaces~$\Ss_{\tria}^{-1,0}$ or $\Ss_{\tria}^{0,1}$ that is \emph{local} and \emph{biorthogonal} to $\Phi_\tria$ as required in assumption~\eqref{eq:assumption_psi}.
One remedy is to construct $\Psi_\tria$ in a (finite element) space w.r.t.~a refined partition $\tria_*\succ \tria$.
However, this implies that some opposite order operator $B \in \cLis(\W, \W')$ has to be discretized on a space w.r.t.~the \emph{refined} partition~$\tria_*$.
This increases the cost of the preconditioner, and moreover, increases implementational complexity as one has to actually construct this refined partition.

To circumvent (explicit) dependence on the refined partition $\tria_*$, we shall apply the idea described in~\cite[Sec.~3.3]{249.97}.
 That is, we will construct an operator $B_\tria \in \cLis(\W_\tria, \W_\tria')$ by decomposing the space $\W_\tria$ into a
a standard finite element space~$\U_\tria$, either $\Ss^{-1,0}_\tria$ (in Sect.~\ref{sec:decompose_pw_const}) or $\Ss^{0,1}_\tria$ (in Sect.~\ref{sec:decompose_cont_pw_lin}), and some bubble space $\B_\tria$.
On $\U_\tria$ we will apply the Galerkin discretization operator of the opposite order operator $B$,
whereas on the bubble space $\B_\tria$ a diagonal scaling will suffice.

In the first subsection we present this construction of $B_\tria$ for some abstract $\W_\tria$. In the subsequent subsections, we will present two viable options for $\W_\tria$, leading
to two different preconditioners.

\subsection{Stable decomposition}\label{stabledecomp}
The role of the space `$\Y$' is the next abstract proposition is going to be played by $\W_\tria$.
\begin{proposition}\label{proppie} Let $\ZZ$ be an inner product space, $Q \in \cL(\ZZ,\ZZ)$ a projector, and with $\U:=\ran Q$, let $\B:=\ran(\identity -Q)$, $B^\U \in \cLis(\U,\U')$, and $B^\B \in \cLis(\B,\B')$.
Then for any subspace $\Y \subset \ZZ$,
\[
(B y)(\tilde y):=(B^\U Q y)(Q \tilde y) + (B^\B( \identity - Q) y)((\identity - Q) \tilde y) \quad (y,\tilde y \in \Y),
\]
is bounded and coercive --- $B \in \cLis(\Y, \Y')$ --- with
 \begin{align*}
        \|B\|_{\cL(\Y, \Y')} &\leq \\
        & \hspace*{-4em}\Big(\|Q\|^2+\sqrt{\|Q\|^4-\|Q\|^2}\Big)
        \max(\|B^\U\|_{\cL(\U, \U')}, \|B^\B\|_{\cL(\B, \B')}),\\
        \|\Re(B)^{-1}\|_{\cL(\Y', \Y)} &\leq \\
        &\hspace*{-4em}\Big(1+\sqrt{1-\|Q\|^{-2}}\Big)\max(\|\Re(B^\U)^{-1}\|_{\cL(\U', \U)}, \|\Re(B^\B)^{-1}\|_{\cL(\B', \B)}),
    \end{align*}
where $\|Q\|:=\|Q\|_{\cL(\ZZ,\ZZ)}$.
\end{proposition}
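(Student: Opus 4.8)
\noindent\textit{Proof proposal.}
The plan is to reduce both estimates to a single sharp geometric inequality for the projector $Q$, after which the bounds on $B$ follow by mechanical manipulation of its defining bilinear form together with the coercivity and boundedness of $B^\U$ and $B^\B$. Write $q:=\|Q\|$. Since $Q$ restricts to the identity on $\U=\ran Q$, we have $q\ge1$; I will take $q>1$, the case $q=1$ (an orthogonal projector, $\U\perp\B$) following by the same argument, and read the degenerate cases where $\U$ or $\B$ is trivial with the obvious conventions. (In the applications $\Y=\W_\tria$ is finite-dimensional, so the characterisations of coercive operators recalled in Section~\ref{sec:notations} apply on $\Y$.)

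The one non-routine ingredient, which I expect to be the main obstacle, is the claim
\[
 |\langle u,b\rangle|\le\sqrt{1-q^{-2}}\,\|u\|\,\|b\|\qquad(u\in\U,\ b\in\B).
\]
I would prove it by the discriminant trick: for $t\in\R$, $tu+b\in\ZZ$ and $Q(tu+b)=tu$, so $t^2\|u\|^2\le q^2\|tu+b\|^2=q^2(t^2\|u\|^2+2t\langle u,b\rangle+\|b\|^2)$; thus $t\mapsto(q^2-1)\|u\|^2t^2+2q^2\langle u,b\rangle t+q^2\|b\|^2$ is nonnegative on $\R$, and its discriminant being $\le0$ yields $q^2\langle u,b\rangle^2\le(q^2-1)\|u\|^2\|b\|^2$, i.e.\ the claim. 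Inserting $u=Qy$, $b=(\identity-Q)y$ into $\|y\|^2=\|Qy\|^2+2\langle Qy,(\identity-Q)y\rangle+\|(\identity-Q)y\|^2$ and using $2\|u\|\|b\|\le\|u\|^2+\|b\|^2$, I obtain, with $c:=\sqrt{1-q^{-2}}$ and $N(y)^2:=\|Qy\|^2+\|(\identity-Q)y\|^2$,
\[
 (1-c)\,N(y)^2\le\|y\|^2\le(1+c)\,N(y)^2\qquad(y\in\Y).
\]
Here the only thing to verify is the identity $(1-c)^{-1}=q^2+\sqrt{q^4-q^2}$ (from $(1-c)(1+c)=q^{-2}$), so that the left inequality reads $N(y)^2\le(q^2+\sqrt{q^4-q^2})\|y\|^2$ and the right one $N(y)^2\ge(1+c)^{-1}\|y\|^2$ — exactly the constants in the statement.

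It then remains to assemble the two bounds, both steps being purely computational. For boundedness, for $y,\tilde y\in\Y$ the definition of $B$ and boundedness of $B^\U,B^\B$ give
\[
 |(By)(\tilde y)|\le M\big(\|Qy\|\,\|Q\tilde y\|+\|(\identity-Q)y\|\,\|(\identity-Q)\tilde y\|\big)\le M\,N(y)\,N(\tilde y),
\]
with $M:=\max(\|B^\U\|_{\cL(\U,\U')},\|B^\B\|_{\cL(\B,\B')})$, the last step being Cauchy--Schwarz in $\R^2$; inserting the upper bound for $N(\cdot)^2$ yields the first asserted estimate. For coercivity, $(By)(y)=(B^\U Qy)(Qy)+(B^\B(\identity-Q)y)((\identity-Q)y)$, and the identity $\inf_{0\neq v}(Cv)(v)/\|v\|^2=\|\Re(C)^{-1}\|^{-1}$ from Section~\ref{sec:notations}, applied to $C=B^\U$ and $C=B^\B$, gives $(By)(y)\ge m^{-1}N(y)^2\ge m^{-1}(1+c)^{-1}\|y\|^2$ with $m:=\max(\|\Re(B^\U)^{-1}\|_{\cL(\U',\U)},\|\Re(B^\B)^{-1}\|_{\cL(\B',\B)})$. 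Since $(\Re(B)y)(y)=(By)(y)$, this gives coercivity of $B$ (hence $B\in\cLis(\Y,\Y')$ together with the norm bound), and reading the same identity with $C=B$ gives $\|\Re(B)^{-1}\|_{\cL(\Y,\Y')}\le m(1+c)=m(1+\sqrt{1-q^{-2}})$, the second asserted estimate.
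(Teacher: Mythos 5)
Your proof is correct, and the overall structure mirrors the paper's proof: split $y$ as $Qy+(\identity-Q)y$, bound $(By)(\tilde y)$ above and $(By)(y)$ below in terms of $N(y)^2:=\|Qy\|^2+\|(\identity-Q)y\|^2$ using boundedness and coercivity of $B^\U,B^\B$, and then translate $N(y)^2\eqsim\|y\|^2$ into the stated constants. The one place where you deviate is the key inequality relating $\|u+b\|^2$ to $\|u\|^2+\|b\|^2$: the paper introduces $\gamma:=\sup\{|\langle u,b\rangle|/(\|u\|\|b\|)\}$ and \emph{cites} Szyld's oblique-projection identity $\|Q\|=1/\sqrt{1-\gamma^2}$, whereas you prove the one-sided estimate $|\langle u,b\rangle|\le\sqrt{1-\|Q\|^{-2}}\,\|u\|\|b\|$ directly by the discriminant argument applied to $t\mapsto\|Q(tu+b)\|^2\le\|Q\|^2\|tu+b\|^2$. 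The one-sided estimate is all that is needed, your derivation is elementary and self-contained (it doesn't use that $\ZZ$ is complete or that $Q$ attains its norm), and it handles the $\|Q\|=1$ edge case cleanly; the paper's route instead keeps the argument two lines at the cost of an external reference and of the (unused) equality direction. Everything else --- the Cauchy--Schwarz step in $\R^2$ for the upper bound, the use of $\|\Re(C)^{-1}\|^{-1}=\inf(Cv)(v)/\|v\|^2$ for the coercivity bound, and the algebra $(1-c)^{-1}=\|Q\|^2+\sqrt{\|Q\|^4-\|Q\|^2}$ --- matches the paper's ``the proof is easily completed.''
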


\begin{proof}
    Let $y, \tilde y \in \Y$. Write $u = Q y$, $b = (\identity - Q)y$, and similarly $\tilde u = Q \tilde y$, $\tilde b = (\identity - Q)\tilde y$.
    We have
    \begin{align*}
    |(B(y))(\tilde y)| \leq &\max\left(\|B^\U\|_{\cL(\U, \U')}, \|B^\B\|_{\cL(\B, \B')}\right) \cdot \left(\|u\|_\ZZ \|\tilde u\|_\ZZ + \|b\|_\ZZ \| \tilde b\|_\ZZ\right)\\
                            \leq&\max(\cdots) \sqrt{\|u\|_\ZZ^2 + \|b\|_\ZZ^2}\cdot\sqrt{\|\tilde u\|_\ZZ^2 + \|\tilde b\|_\ZZ^2},
    \end{align*}
    and
    \[
    |(B(y))(y)| \geq \min\left(\|\Re(B^\U)^{-1}\|^{-1}_{\cL(\U', \U)}, \| \Re(B^\B)^{-1}\|_{\cL(\B', \B)}^{-1}\right) \cdot (\|u\|_\ZZ^2 + \|b\|_\ZZ^2).
    \]
    With $\gamma:=\sup_{0 \neq (u,b)\in \U \times \B} \frac{|\langle u,b\rangle_\ZZ|}{\|u\|_\ZZ\|b\|_\ZZ}$, for $0 \neq (u,b)\in \U \times \B$ we have $\frac{\|u+b\|_\ZZ^2}{\|u\|_\ZZ^2 +\|b\|_\ZZ^2} \in [1-\gamma,1+\gamma]$.
    Using that $\sqrt{\frac{1}{1-\gamma^2}}=\|Q\|$ (see e.g. \cite[(5.5), (5.7), (6.2)]{257}), the proof is easily completed.
\end{proof}

\begin{remark}
    For a quantitatively weaker result as Proposition~\ref{proppie} to hold it is actually sufficient when $Q$ is only defined on $\Y$,
    and neither is it needed that it is a projector.
    Under these relaxed conditions, obvious estimates show bounds as in Proposition~\ref{proppie} with the factors $\|Q\|^2+\sqrt{\|Q\|^4-\|Q\|^2}$ and $1+\sqrt{1-\|Q\|^{-2}}$ reading as $\|Q|_\Y\|^2+(1+\|Q|_\Y\|)^2$ and $2$, respectively. Both original factors are equal to $1$ when $Q$ is an orthogonal projector.
\end{remark}

We are going to apply this abstract proposition with `$\Y$'$=\W_\tria$,
`$\U$'$=\U_\tria$ being a standard finite element space,
`$\B$'$=\B_\tria$ being a suitably constructed `bubble space',
and `$\ZZ$'$=\ZZ_\tria:=\U_\tria+\B_\tria$, all equipped with the norm on $\W$.
The resulting `$B$' will be the $B_\tria \in \cLis(\W_\tria,\W_\tria')$ we are seeking.

In order to apply above proposition, what is left is the construction of a (uniformly) bounded projector defined on $\ZZ_\tria$.
Furthermore, to allow for a simple preconditioner on $\B_\tria$ we would like to find a setting in which on this bubble space the $\W$-norm is equivalent to a weighted $L_2$-norm.
Both issues will be dealt with in the next two lemmas. The operator $Q_\tria|_{\ZZ_\tria}$ in the first lemma will play the role of `$Q$' in Proposition~\ref{proppie}.

\begin{lemma}\label{lem:boundedW}
    Let $Q_\tria \in \cL(L_2(\Omega),H^1_{0,\gamma}(\Omega)')$ be a projector, $\U_\tria \subseteq \ran Q_\tria$ and $\B_\tria \subseteq \ran(\identity-Q_\tria)$ be subspaces of $L_2(\Omega)$,  and with $\ZZ_\tria:=\U_\tria+\B_\tria$, let
     \begin{enumerate}
    \item $\| h_\tria^{-1}(\identity - Q_\tria')\|_{\cL(H^1_{0,\gamma}(\Omega),L_2(\Omega))} \lesssim 1$, \hfill(approximation property)\label{enum:approx}
    \item $\sup_{\tria \in \bbT} \| Q_\tria |_{\ZZ_\tria} \|_{\cL((\ZZ_\tria,\|\cdot\|_{L_2(\Omega)}), L_2(\Omega))} \lesssim 1$, \hfill(boundedness in $L_2(\Omega)$)\label{enum:boundedl2}
  \item $\| h_\tria \cdot \|_{L_2(\Omega)} \lesssim \| \cdot \|_{H_{0,\gamma}^1(\Omega)'}$ on $\ZZ_\tria$. \hfill(inverse inequality)\label{enum:invinequality}
   \end{enumerate}
    Then $Q_\tria|_{\ZZ_\tria} \colon \ZZ_\tria \rightarrow \ZZ_\tria$ is a projector, $\ran Q_\tria|_{\ZZ_\tria}=\U_\tria$, $\ran (\identity-Q_\tria|_{\ZZ_\tria})=\B_\tria$, and
    \begin{enumerate}[(i)]
        \item $\sup_{\tria \in \bbT} \| Q_\tria |_{\ZZ_\tria} \|_{\cL((\ZZ_\tria,\|\cdot\|_\W), \W)} < \infty$,
       \item $\| \cdot \|_{\W} \eqsim \| h_\tria^s \cdot \|_{L_2(\Omega)}$ on $\B_\tria$.
    \end{enumerate}
\end{lemma}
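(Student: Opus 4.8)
The plan is to settle the structural statements by a direct algebraic computation and to derive the two quantitative assertions (i) and (ii) by real interpolation from endpoint estimates between $L_2(\Omega)$ and $H^1_{0,\gamma}(\Omega)'$, using that $\W=\V'=[L_2(\Omega),H^1_{0,\gamma}(\Omega)']_{s,2}$ by duality of real interpolation spaces. For the structural part, let $z\in\ZZ_\tria$ and write $z=u+b$ with $u\in\U_\tria\subseteq\ran Q_\tria$ and $b\in\B_\tria\subseteq\ran(\identity-Q_\tria)$. Since $Q_\tria$ restricts to the identity on its range, $Q_\tria u=u$, and $Q_\tria b=0$; hence $Q_\tria z=u\in\U_\tria\subseteq\ZZ_\tria$, the restriction $Q_\tria|_{\ZZ_\tria}$ is idempotent with $\ran Q_\tria|_{\ZZ_\tria}=\U_\tria$ and $\ran(\identity-Q_\tria|_{\ZZ_\tria})=\B_\tria$, and $\U_\tria\cap\B_\tria=\{0\}$.

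The core of the proof is a pair of endpoint estimates, derived from the adjoint identity
\[
(Q_\tria f)(v)=\langle f,v\rangle_{L_2(\Omega)}-\langle f,(\identity-Q_\tria')v\rangle_{L_2(\Omega)}\qquad(f\in L_2(\Omega),\ v\in H^1_{0,\gamma}(\Omega)),
\]
together with the approximation property \eqref{enum:approx} and the inverse inequality \eqref{enum:invinequality}. Bounding the last term by $\|h_\tria f\|_{L_2(\Omega)}\|h_\tria^{-1}(\identity-Q_\tria')v\|_{L_2(\Omega)}\lesssim\|h_\tria f\|_{L_2(\Omega)}\|v\|_{H^1_{0,\gamma}(\Omega)}$ and specialising to $f=z\in\ZZ_\tria$, where $\langle z,v\rangle_{L_2(\Omega)}\le\|z\|_{H^1_{0,\gamma}(\Omega)'}\|v\|_{H^1_{0,\gamma}(\Omega)}$ and $\|h_\tria z\|_{L_2(\Omega)}\lesssim\|z\|_{H^1_{0,\gamma}(\Omega)'}$, one obtains $\|Q_\tria z\|_{H^1_{0,\gamma}(\Omega)'}\lesssim\|z\|_{H^1_{0,\gamma}(\Omega)'}$ uniformly in $\tria$. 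Taking instead $f=b\in\B_\tria$, where $Q_\tria b=0$ turns the identity into $\langle b,v\rangle_{L_2(\Omega)}=\langle b,(\identity-Q_\tria')v\rangle_{L_2(\Omega)}$, gives $\|b\|_{H^1_{0,\gamma}(\Omega)'}\lesssim\|h_\tria b\|_{L_2(\Omega)}$, which with \eqref{enum:invinequality} is the endpoint equivalence $\|\cdot\|_{H^1_{0,\gamma}(\Omega)'}\eqsim\|h_\tria\,\cdot\|_{L_2(\Omega)}$ on $\B_\tria$.

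Assertions (i) and (ii) then follow by interpolation. For (i), $Q_\tria|_{\ZZ_\tria}$ is uniformly bounded on $\ZZ_\tria$ in $\|\cdot\|_{L_2(\Omega)}$ by \eqref{enum:boundedl2} and in $\|\cdot\|_{H^1_{0,\gamma}(\Omega)'}$ by the estimate above, hence uniformly bounded in the intermediate norm $\|\cdot\|_{\W}$. For (ii), on $\B_\tria$ one has $\|\cdot\|_{L_2(\Omega)}=\|h_\tria^{0}\,\cdot\|_{L_2(\Omega)}$ and $\|\cdot\|_{H^1_{0,\gamma}(\Omega)'}\eqsim\|h_\tria\,\cdot\|_{L_2(\Omega)}$, and since the $s$-interpolant of the weighted $L_2(\Omega)$-norms with weights $1$ and $h_\tria^{2}$ is $\|h_\tria^{s}\,\cdot\|_{L_2(\Omega)}$ up to constants independent of the weight, interpolation gives $\|\cdot\|_{\W}\eqsim\|h_\tria^{s}\,\cdot\|_{L_2(\Omega)}$ on $\B_\tria$.

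The step requiring real care --- the main obstacle --- is that these interpolation arguments must be carried out with constants uniform over $\bbT$, whereas the boundedness of $Q_\tria|_{\ZZ_\tria}$, respectively the endpoint equivalence, is only available on the finite-dimensional subspace $\ZZ_\tria$, respectively $\B_\tria$, and interpolation is not automatically local to a subspace. I would handle this via the $K$-functional of the couple $(L_2(\Omega),H^1_{0,\gamma}(\Omega)')$: it suffices to show that for $z\in\ZZ_\tria$ a near-optimal decomposition for $K(t,z;L_2(\Omega),H^1_{0,\gamma}(\Omega)')$ can be chosen with both summands in $\ZZ_\tria$ --- so that \eqref{enum:boundedl2} and the $H^1_{0,\gamma}(\Omega)'$-endpoint estimate apply termwise --- and likewise for $\B_\tria$, which is complemented in $\ZZ_\tria$ by the now uniformly bounded projector $\identity-Q_\tria|_{\ZZ_\tria}$. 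Producing such decompositions without invoking a mesh-grading hypothesis is where \eqref{enum:approx} and \eqref{enum:invinequality} are used a second time, beyond the soft endpoint bounds, in the spirit of the proof of Theorem~\ref{thm:boundedbiorth} and of \cite{249.97}.
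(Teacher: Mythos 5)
Your proof follows essentially the same route as the paper's own: the structural statements by direct computation; the $H^1_{0,\gamma}(\Omega)'$-endpoint bound for $Q_\tria|_{\ZZ_\tria}$ and the equivalence on $\B_\tria$ from the adjoint identity, the approximation property~\eqref{enum:approx}, Cauchy--Schwarz, and the inverse inequality~\eqref{enum:invinequality}; then (i) and (ii) by interpolating the $L_2(\Omega)$- and $H^1_{0,\gamma}(\Omega)'$-endpoint estimates. Whether one bounds $Q_\tria z$ directly (as you do) or $(\identity-Q_\tria)u$ and then subtracts (as the paper does) is immaterial.

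Where your write-up adds something is the final paragraph. You are right that passing from endpoint bounds which hold \emph{only on $\ZZ_\tria$} (resp.~$\B_\tria$) to a bound in the interpolated $\W$-norm with a constant uniform in $\tria$ is not automatic: the ambient $K$-functional $K(t,\cdot;L_2(\Omega),H^1_{0,\gamma}(\Omega)')$ is defined by decompositions in the full spaces, and these need not lie (even approximately) in $\ZZ_\tria$ with $\tria$-independent constants. Nor can one dodge this by claiming global boundedness: $Q_\tria$ is in general \emph{not} uniformly $H^1_{0,\gamma}(\Omega)'$-bounded on all of $L_2(\Omega)$, since the derived estimate $\|(\identity-Q_\tria)u\|_{H^1_{0,\gamma}(\Omega)'}\lesssim\|h_\tria u\|_{L_2(\Omega)}$ only converts to an $H^1_{0,\gamma}(\Omega)'$-bound via the inverse inequality, which is confined to $\ZZ_\tria$. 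The paper's proof invokes ``an interpolation argument'' twice without elaborating on this, so your proposal matches the paper's level of detail and is, if anything, more explicit about what still has to be checked. Your suggested remedy --- exhibiting near-optimal $K$-functional decompositions within $\ZZ_\tria$, resp.~$\B_\tria$, using~\eqref{enum:approx} and~\eqref{enum:invinequality} once more --- is the correct idea, but as you acknowledge it is sketched and not carried out; that is the one step of the argument that neither your proposal nor the paper's written proof settles in full.
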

\begin{proof} The first three statements are easily verified.
   From~\eqref{enum:approx} it follows that for $u \in H_{0,\gamma}^1(\Omega)'$:
    \begin{align*}
        \| (\identity - Q_\tria)u\|_{H^1_{0,\gamma}(\Omega)'} &= \sup_{v \in H^1_{0,\gamma}(\Omega)} \frac{ \langle u, (\identity - Q_\tria')v\rangle_{L_2(\Omega)}}{\| v\|_{H_{0,\gamma}^1(\Omega)}} \\
                                                                     &\leq \sup_{v \in H^1_{0,\gamma}(\Omega)} \frac{ \|h_\tria u\|_{L_2(\Omega)} \|h_\tria^{-1}(\identity - Q_\tria') v\|_{L_2(\Omega)}}{ \| v\|_{H_{0,\gamma}^1(\Omega)}}\\
                                                                     &\lesssim \|h_\tria u\|_{L_2(\Omega)}.
    \end{align*}

    Together with the inverse inequality on $\ZZ_\tria$, this  gives (uniform) boundedness of $\|(\identity - Q_\tria)|_{\ZZ_\tria} \|_{\cL((\ZZ_\tria,\|\cdot\|_{H^1_{0,\gamma}(\Omega)'}), H^1_{0,\gamma}(\Omega)')}$
    and thus of
    $\|{Q_\tria}|_{\ZZ_\tria} \|_{\cL((\ZZ_\tria,\|\cdot\|_{H^1_{0,\gamma}(\Omega)'}), H^1_{0,\gamma}(\Omega)')}$.  The first result then follows from~\eqref{enum:boundedl2} and an interpolation argument.

    By the inverse inequality on $\B_\tria$ and the previously derived inequality, we have for $b_\tria \in \B_\tria  \subseteq \ran(\identity-Q_\tria)$  that
    \[
  	\|b_\tria\|_{H_{0,\gamma}^1(\Omega)'} = \| (\identity - Q_\tria)b_\tria\|_{H_{0,\gamma}^1(\Omega)'} \lesssim \|h_\tria b_\tria\|_{L_2(\Omega)} \lesssim \|b_\tria\|_{H_{0,\gamma}^1(\Omega)'}.
    \]
   Another interpolation argument yields the second result.
\end{proof}
\begin{lemma}\label{lem:diagonal_b}
    Suppose that $\| \cdot \|_\W \eqsim \| h_\tria^s \cdot \|_{L_2(\Omega)}$ holds on $\B_\tria$,
    and that $\Theta_\tria$ is a \emph{uniformly $\|h_\tria^s \cdot\|_{L_2(\Omega)}$-stable basis for $\B_\tria$}, i.e.
\[
\B_\tria = \Span \Theta_\tria \quad \text{and} \quad \big\|h_\tria^s \sum\nolimits_{\theta \in \Theta_\tria} c_\theta \theta\big\|^2_{L_2(\Omega)} \eqsim \sum\nolimits_{\theta \in \Theta_\tria} |c_\theta|^2 \|h_\tria^s \theta\|^2_{L_2(\Omega)},
\]
then, for any $\beta_1 > 0$, an operator $B_\tria^\B \in \cLis(\B_\tria, \B_\tria')$ is given by
\begin{equation}\label{eq:bubble_b}
    \big(B_\tria^\B \sum_{\theta \in \Theta_\tria} c_{\theta} \theta\big)\big( \sum_{\theta \in \Theta_\tria} d_{\theta} \theta\big) = \beta_1 \sum_{\theta \in \Theta_\tria} c_\theta d_\theta \| h_\tria^s \theta\|_{L_2(\Omega)}^2.
\end{equation}
\end{lemma}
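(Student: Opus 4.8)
The plan is to verify directly that the bilinear form defined in \eqref{eq:bubble_b} is bounded and coercive on $\B_\tria$ with respect to the $\W$-norm, using the hypothesis $\|\cdot\|_\W \eqsim \|h_\tria^s \cdot\|_{L_2(\Omega)}$ on $\B_\tria$ to reduce everything to the weighted $L_2$-norm, and then the assumed stability of the basis $\Theta_\tria$ to pass between the weighted $L_2$-norm of $b_\tria = \sum_\theta c_\theta \theta$ and the sequence norm $\big(\sum_\theta |c_\theta|^2 \|h_\tria^s\theta\|_{L_2(\Omega)}^2\big)^{1/2}$ of its coefficients.

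First I would fix $b_\tria = \sum_{\theta} c_\theta \theta$ and $\tilde b_\tria = \sum_\theta d_\theta\theta$ in $\B_\tria$ and observe that, by Cauchy-Schwarz in $\ell_2$,
\[
\big| (B_\tria^\B b_\tria)(\tilde b_\tria)\big| = \beta_1 \Big| \sum_\theta c_\theta d_\theta \|h_\tria^s\theta\|_{L_2(\Omega)}^2 \Big| \le \beta_1 \Big(\sum_\theta |c_\theta|^2\|h_\tria^s\theta\|_{L_2(\Omega)}^2\Big)^{1/2}\Big(\sum_\theta |d_\theta|^2\|h_\tria^s\theta\|_{L_2(\Omega)}^2\Big)^{1/2},
\]
and then apply the upper bound in the stability estimate for $\Theta_\tria$ to each factor to get $\lesssim \beta_1 \|h_\tria^s b_\tria\|_{L_2(\Omega)}\|h_\tria^s\tilde b_\tria\|_{L_2(\Omega)} \eqsim \beta_1 \|b_\tria\|_\W\|\tilde b_\tria\|_\W$, which gives boundedness. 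For coercivity, take $\tilde b_\tria = b_\tria$, so $(B_\tria^\B b_\tria)(b_\tria) = \beta_1 \sum_\theta |c_\theta|^2\|h_\tria^s\theta\|_{L_2(\Omega)}^2$; now the lower bound in the stability estimate gives $\sum_\theta |c_\theta|^2\|h_\tria^s\theta\|_{L_2(\Omega)}^2 \gtrsim \|h_\tria^s b_\tria\|_{L_2(\Omega)}^2 \eqsim \|b_\tria\|_\W^2$, hence $(B_\tria^\B b_\tria)(b_\tria) \gtrsim \beta_1 \|b_\tria\|_\W^2$. Since $B_\tria^\B$ is evidently symmetric, $\Re(B_\tria^\B) = B_\tria^\B$, and boundedness together with coercivity yields $B_\tria^\B \in \cLis(\B_\tria,\B_\tria')$, with the constants depending on $\beta_1$ and the equivalence constants but not on $\tria$.

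One point that needs a word of care is that the formula \eqref{eq:bubble_b} is a priori only an expression in the coefficient vectors $(c_\theta)$ and $(d_\theta)$, so to know it defines an operator $\B_\tria \to \B_\tria'$ one must check it is well-defined, i.e.\ independent of the representation of $b_\tria$ and $\tilde b_\tria$ in terms of $\Theta_\tria$; but this is immediate because the lower bound in the stability estimate forces $\Theta_\tria$ to be linearly independent (a vanishing combination has vanishing coefficients), so the representation is unique. I do not expect any genuine obstacle here: the lemma is essentially the statement that a diagonal matrix with entries $\beta_1\|h_\tria^s\theta\|_{L_2(\Omega)}^2$, read through a uniformly stable (Riesz) basis, represents a uniformly boundedly invertible, coercive operator, and the only mild subtlety is to keep track that all hidden constants are $\tria$-independent, which they are since they come only from the fixed equivalence $\|\cdot\|_\W\eqsim\|h_\tria^s\cdot\|_{L_2(\Omega)}$ on $\B_\tria$ and the uniform Riesz-basis constants of $\Theta_\tria$.
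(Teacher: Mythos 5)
Your verification is correct; the paper states Lemma~\ref{lem:diagonal_b} without proof, treating it as a routine consequence of the two hypotheses, and your direct argument (Cauchy--Schwarz and the stability estimate for boundedness, diagonal positivity and the stability estimate for coercivity, symmetry giving $\Re(B_\tria^\B)=B_\tria^\B$, and the observation that all hidden constants stem from the assumed $\tria$-uniform equivalences) is exactly that routine argument. One small slip of terminology at the end: linear independence of $\Theta_\tria$ follows from the direction $\sum_{\theta}|c_\theta|^2\|h_\tria^s\theta\|_{L_2(\Omega)}^2 \lesssim \|h_\tria^s\sum_{\theta} c_\theta\theta\|_{L_2(\Omega)}^2$ applied to a vanishing combination, which is the direction you earlier labeled the ``upper bound'' and used for boundedness, not the ``lower bound''; since the stability hypothesis supplies both directions this does not affect the conclusion, but it is worth keeping the two inequalities straight, as boundedness and coercivity each rely on exactly one of them.
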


\begin{remark}
    It is not possible to construct $B_\tria \in \Lis(\W_\tria, \W_\tria')$ directly as a diagonal scaling operator.
    Indeed, this would require $\|w_\tria\|_\W \lesssim \|h_\tria^{s} w_\tria\|_{L_2(\Omega)}$ for $w_\tria \in \W_\tria$.
    Suppose this to be true, then by $L_2(\Omega)$-boundedness of the biorthogonal projector $P_\tria$,
    we would find for $v_\tria \in \V_\tria$ that
    \begin{align*}
        \|h_\tria^{-s}v_\tria\|_{L_2(\Omega)}
        &= \sup_{w \in L_2(\Omega)} \frac{\langle h_\tria^{-s} v_\tria, P_\tria w\rangle_{L_2(\Omega)}}{\| w\|_{L_2(\Omega)}}
        \lesssim \sup_{w \in L_2(\Omega)} \frac{\langle h_\tria^{-s} v_\tria, P_\tria  w \rangle_{L_2(\Omega)}}{\|P_\tria  w\|_{L_2(\Omega)}}\\
        &=  \sup_{w_\tria \in \W_\tria} \frac{\langle v_\tria, w_\tria \rangle_{L_2(\Omega)}}{\| h_\tria^s w_\tria\|_{L_2(\Omega)}}
        \lesssim \sup_{w_\tria \in \W_\tria} \frac{\langle v_\tria, w_\tria \rangle_{L_2(\Omega)}}{\| w_\tria\|_{\W}} \leq \|v_\tria\|_{\V},
    \end{align*}
    which is known not to be true for smooth functions in $\V_\tria$.
\end{remark}

Concluding: If, given a family of subspaces $\W_\tria \subset L_2(\Omega)$, one can find a family of projectors $Q_\tria \in \cL(L_2(\Omega),H^1_{0,\gamma}(\Omega)')$,
subspaces $\U_\tria \subseteq \ran Q_\tria$ (of finite element type) and $\B_\tria \subseteq  \ran (\identity-Q_\tria)$ such that
\begin{equation} \label{nesting}
\W_\tria \subset \ZZ_\tria:=\U_\tria+\B_\tria
\end{equation}
(with these spaces equipped with $\|\cdot\|_\W$-norm) and the conditions of Lemma~\ref{lem:boundedW} are satisfied, then given $B_\tria^\U \in \cLis(\U_\tria,\U_\tria')$ and $B_\tria^\B \in \cLis(\B_\tria,\B_\tria')$, the operator $B_\tria^\W$ defined by
\begin{equation} \label{defB}
(B_\tria w)(\tilde w):=(B_\tria^\U Q_\tria w)(Q_\tria \tilde w) + (B_\tria^\B( \identity - Q_\tria) w)((\identity - Q_\tria) \tilde w) \quad (w,\tilde w \in \W_\tria),
\end{equation}
is in $\cLis(\W_\tria,\W_\tria')$.
Moreover, assuming a uniformly $\|h_\tria^s \cdot\|_{L_2(\Omega)}$-stable basis for $\B_\tria$, the operator $B_\tria^\B$ can be of simple diagonal scaling type, where a natural definition for $B_\tria^\U$ is by $(B_\tria u)(\tilde u):=(B u)(\tilde u)$ ($u,\tilde u \in \U_\tria$) for some opposite order operator $B \in \cLis(\W,\W')$.
Finally, since $Q_\tria$ enters the implementation, we search this projector to be of local type.

\subsection{A space $\W_\tria$ decomposable into the piecewise constants and bubbles}\label{sec:decompose_pw_const}
In this subsection, we construct $\W_\tria=\Span \Psi_\tria$ such that both $\Psi_\tria$ is biorthogonal to $\Phi_\tria$
(Assumption~\eqref{eq:assumption_psi}),
and $\W_\tria$ allows an appropriate decomposition into the space of piecewise constants $\U_\tria:=\Ss_{\tria}^{-1,0}$ and a bubble space $\B_\tria$.

Fix $\tria \in \bbT$ and let $\tria_{*} \succ \tria$ be a uniform red-refinement, i.e.~every simplex
$T \in \tria$ is subdivided into $2^d$ subsimplices.\footnote{Red-refinement is not uniquely defined for $d \geq 3$, but the refined simplices at the corners of the `parent simplex'
are uniquely determined which suffices for our goal.}
We define $\Psi_\tria = \{ \psi_{\tria, \nu}\colon \nu \in N_\tria^0\} \subset  \Ss_{\tria_{*}}^{-1, 0}$ by taking a
weighted difference of `patch indicator' functions:
\begin{equation}\label{eq:biorth_patch}
    \psi_{\tria, \nu} := 2^{d+1} \1_{\omega_{\tria_{*}, \nu}} - \1_{\omega_{\tria, \nu}}  \quad (\nu \in N_\tria^0).
\end{equation}
\begin{lemma}\label{prop:existence_psi}
    The collection $\Psi_\tria$ satisfies Assumption~\eqref{eq:assumption_psi} with ${\supp \psi_{\tria,\nu} = \omega_{\tria,\nu}}$ and
\begin{equation}\label{eq:biorth_disc}
        \langle \psi_{\tria, \nu}, \phi_{\tria, \nu'} \rangle_{L_2(\Omega)} = \delta_{\nu \nu'} |\omega_{\tria,\nu}| \quad(\nu,\nu' \in N_\tria^0).
 \end{equation}
\end{lemma}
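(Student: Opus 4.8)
The plan is to verify the two displayed claims --- the support identity and the biorthogonality relation \eqref{eq:biorth_disc} --- by a direct computation, and then observe that \eqref{eq:biorth_disc} immediately implies the two bullet points of Assumption~\eqref{eq:assumption_psi}. First I would establish $\supp \psi_{\tria,\nu} = \omega_{\tria,\nu}$: since $\omega_{\tria_*,\nu} \subseteq \omega_{\tria,\nu}$ (the fine patch around $\nu$ sits inside the coarse one), the function $\psi_{\tria,\nu} = 2^{d+1}\1_{\omega_{\tria_*,\nu}} - \1_{\omega_{\tria,\nu}}$ vanishes outside $\omega_{\tria,\nu}$, and on $\omega_{\tria,\nu}\setminus\omega_{\tria_*,\nu}$ it equals $-1 \neq 0$, so the support is exactly the closed patch $\omega_{\tria,\nu}$. (One should note the fine patch is nonempty, so the support is genuinely all of $\omega_{\tria,\nu}$, not a proper subset.)

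Next comes the key identity \eqref{eq:biorth_disc}. I would compute $\langle \psi_{\tria,\nu}, \phi_{\tria,\nu'}\rangle_{L_2(\Omega)} = 2^{d+1}\int_{\omega_{\tria_*,\nu}} \phi_{\tria,\nu'} - \int_{\omega_{\tria,\nu}} \phi_{\tria,\nu'}$. For the second integral: $\phi_{\tria,\nu'}$ is supported in $\omega_{\tria,\nu'}$, which meets $\omega_{\tria,\nu}$ only when $\nu,\nu'$ are vertices of a common simplex; in any case $\int_{\omega_{\tria,\nu}} \phi_{\tria,\nu'} = \sum_{T\colon \nu\in T} \int_T \phi_{\tria,\nu'}$ and on each simplex $T$ containing $\nu$, $\int_T \phi_{\tria,\nu'} = |T|/(d+1)$ if $\nu'\in T$ (and $0$ otherwise), using that the barycentre of a linear nodal basis function integrates to $|T|/(d+1)$. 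For the first integral I would use that a red-refinement subdivides each $T$ into $2^d$ congruent-volume pieces, and the fine patch $\omega_{\tria_*,\nu}$ consists, within each coarse simplex $T\ni\nu$, of exactly those $2^{d-1}$ fine subsimplices touching $\nu$ --- wait, more carefully: the corner fine simplex at $\nu$ plus the adjacent ones; the cleanest route is to note $\int_{\omega_{\tria_*,\nu}\cap T}\phi_{\tria,\nu'}$ can be evaluated by the same barycentric rule on each fine subsimplex since $\phi_{\tria,\nu'}$ is affine on all of $T$. Carrying this out, the factor $2^{d+1}$ in the definition \eqref{eq:biorth_patch} is precisely what is needed to make the $\nu\neq\nu'$ contributions cancel termwise on each $T$ and to make the $\nu=\nu'$ case collapse to $|\omega_{\tria,\nu}|$; the diagonal case $\nu=\nu'$ should be double-checked separately since $\phi_{\tria,\nu}(\nu)=1$ rather than $0$.

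Finally I would deduce Assumption~\eqref{eq:assumption_psi}: the support condition $\supp\psi_{\tria,\nu}\subseteq\omega_{\tria,\nu}$ is the (equality) statement just proved. For the biorthogonality-up-to-constants condition, \eqref{eq:biorth_disc} gives $\langle\phi_{\tria,\nu},\psi_{\tria,\nu'}\rangle_{L_2(\Omega)} = \delta_{\nu\nu'}|\omega_{\tria,\nu}|$, so off-diagonal entries vanish exactly, while on the diagonal I must check $|\omega_{\tria,\nu}| \eqsim \|\phi_{\tria,\nu}\|_{L_2(\Omega)}\|\psi_{\tria,\nu}\|_{L_2(\Omega)}$. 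Using uniform shape regularity and the $K$-mesh property, $\|\phi_{\tria,\nu}\|_{L_2(\Omega)}^2 \eqsim h_{\tria,\nu}^d \eqsim |\omega_{\tria,\nu}|$, and $\|\psi_{\tria,\nu}\|_{L_2(\Omega)}^2 \eqsim |\omega_{\tria,\nu}|$ as well since $\psi_{\tria,\nu}$ is a bounded (by $2^{d+1}+1$) piecewise constant supported on $\omega_{\tria,\nu}$ and bounded below in $L_2$-norm on the non-trivial portion $\omega_{\tria,\nu}\setminus\omega_{\tria_*,\nu}$, whose measure is $\eqsim|\omega_{\tria,\nu}|$; hence the product is $\eqsim|\omega_{\tria,\nu}|$, as required. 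The main obstacle is the bookkeeping in the first integral computation --- correctly identifying which fine subsimplices of the red-refinement lie in $\omega_{\tria_*,\nu}$ and confirming that the normalization constant $2^{d+1}$ makes everything cancel in arbitrary dimension $d$ --- rather than anything conceptually deep.
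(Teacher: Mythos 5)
Your plan follows the same route as the paper: observe the support identity is immediate, then compute $\langle \psi_{\tria,\nu},\phi_{\tria,\nu'}\rangle$ simplex by simplex via the elementary barycentric integral $\int_T \phi_{\tria,\nu'} = |T|/(d+1)$, and finally read off Assumption~\eqref{eq:assumption_psi} from the exact biorthogonality plus the usual shape-regularity norm equivalences. That overall structure is right, and your last paragraph (deriving \eqref{eq:assumption_psi} from \eqref{eq:biorth_disc}, with $\|\phi_{\tria,\nu}\|_{L_2}^2\eqsim|\omega_{\tria,\nu}|$ and $\|\psi_{\tria,\nu}\|_{L_2}^2\eqsim|\omega_{\tria,\nu}|$) is correct and complete.

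There is, however, a concrete gap in the central computation: you misidentify which fine subsimplices of the red-refinement make up $\omega_{\tria_*,\nu}\cap T$. You first suggest ``those $2^{d-1}$ fine subsimplices touching $\nu$'' and then ``the corner fine simplex at $\nu$ plus the adjacent ones'' --- both are wrong. In a red-refinement of a coarse simplex $T$ with $\nu\in N_T$, exactly \emph{one} of the $2^d$ subsimplices contains $\nu$, namely the corner subsimplex $T_{*,\nu}$ (with $|T_{*,\nu}|=2^{-d}|T|$); all the others only touch $\nu$'s edges at midpoints, not at $\nu$ itself. So $\omega_{\tria_*,\nu}\cap T = T_{*,\nu}$, a single simplex. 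With this corrected, the computation you outline does go through: $\phi_{\tria,\nu}$ restricted to $T_{*,\nu}$ takes the values $1$ at $\nu$ and $1/2$ at the $d$ midpoint vertices, giving $\int_{T_{*,\nu}}\phi_{\tria,\nu}=\tfrac{2^{-d}|T|}{d+1}(1+\tfrac{d}{2})$, while $\phi_{\tria,\nu'}$ for $\nu'\neq\nu$ takes value $1/2$ at one midpoint and $0$ at the rest, giving $\int_{T_{*,\nu}}\phi_{\tria,\nu'}=\tfrac{2^{-d}|T|}{d+1}\cdot\tfrac{1}{2}$. Plugging these into $2^{d+1}\int_{T_{*,\nu}}\phi_{\tria,\nu'}-\int_T\phi_{\tria,\nu'}$ and summing over $T\ni\nu$ yields exactly \eqref{eq:biorth_disc}. (The paper obtains the same two integrals via the two-scale refinement relation $\phi_{\tria,\nu}=\phi_{\tria_*,\nu}+\tfrac{1}{2}\sum_{\tilde\nu}\phi_{\tria_*,\tilde\nu}$; your direct affine evaluation is an equivalent route.) As written, your plan asserts ``carrying this out ... everything cancels'' without resolving the structure of $\omega_{\tria_*,\nu}$, which is precisely the bookkeeping step that must be nailed down.
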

\begin{proof}
    Clearly $\supp \psi_{\tria, \nu} = \omega_{\tria, \nu}$, so we are left to show the biorthogonality condition.
    Fix some vertex $\nu \in N^0_\tria$. For a simplex $T_\nu \in \tria$ with $\nu \in T_\nu$, we have
    \[
        \langle \1_{T_\nu}, \phi_{\tria, \nu} \rangle_{L_2(\Omega)} = \frac{|T_{\nu}|}{d+1}.
     \]
     Let $T_{*,\nu} \in \tria_*$ be the (unique) simplex with $\nu \in T_{*,\nu} \subset T_\nu$.  From the refinement equation satisfied by the nodal hats, and $|T_{*,\nu}|=2^{-d}|T_\nu|$, it follows that
     \begin{align*}
         \langle \1_{T_{*,\nu}}, \phi_{\tria, \nu} \rangle_{L_2(\Omega)} &=
         \langle \1_{T_{*,\nu}}, \phi_{\tria_{*}, \nu} + \sum_{\nu \ne \tilde \nu \in N_{T_{*,\nu}}} 2^{-1} \phi_{\tria_{*}, \tilde \nu} \rangle_{L_2(\Omega)} = \frac{2^{-d} |T_\nu|}{d+1}(1 + 2^{-1}d),\\
         \langle \1_{T_{*,\nu}}, \phi_{\tria, \nu'} \rangle_{L_2(\Omega)} &= \cdots = \frac{2^{-d}|T_\nu|}{d+1} 2^{-1} \quad (\nu \ne \nu' \in N^0_{T_\nu}).
        \end{align*}
        From these relations~\eqref{eq:biorth_disc} follows.
\end{proof}

By Lemma~\ref{prop:existence_psi} it has been established that the Fortin interpolator is uniformly bounded, and that $D_\tria$ is represented by a diagonal matrix.
The next proposition verifies the conditions imposed in Sect.~\ref{stabledecomp} for the construction of $B_\tria$.

\begin{proposition}\label{prop:existence_biorth_bases}
Let $\U_\tria:= \Ss_{\tria}^{-1,0}$, $\W_\tria:=\Span \Psi_\tria$ as constructed above, $Q_\tria$ be the $L_2(\Omega)$-orthogonal projector onto~$\U_\tria$,
$\Theta_\tria:= (\identity - Q_\tria) \Psi_\tria$,
and
$\B_\tria:=\Span \Theta_\tria$.
Then $\W_\tria \subset \ZZ_\tria:=\U_\tria+\B_\tria$ {\rm(\eqref{nesting})}, the conditions of Lemma~\ref{lem:boundedW} are satisfied, in particular $Q_\tria \psi_\nu=\1_{\omega_\nu}$, and $\Theta_\tria$ is a uniformly $\|h_\tria^s \cdot\|_{L_2(\Omega)}$-stable basis for $\B_\tria$ as required for Lemma~\ref{lem:diagonal_b}.
\end{proposition}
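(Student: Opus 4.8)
The plan is to verify each of the claimed properties in turn for the concrete choices $\U_\tria = \Ss_\tria^{-1,0}$, $Q_\tria$ the $L_2(\Omega)$-orthogonal projector onto $\U_\tria$, $\Theta_\tria = (\identity - Q_\tria)\Psi_\tria$, and $\B_\tria = \Span\Theta_\tria$. First I would compute $Q_\tria \psi_{\tria,\nu}$ explicitly. Since $Q_\tria$ is the $L_2(\Omega)$-orthogonal projector onto piecewise constants w.r.t.~$\tria$, on each $T \in \tria$ it returns the average $|T|^{-1}\int_T (\cdot)$. For $\psi_{\tria,\nu} = 2^{d+1}\1_{\omega_{\tria_*,\nu}} - \1_{\omega_{\tria,\nu}}$, on a simplex $T \subset \omega_{\tria,\nu}$ the function $\1_{\omega_{\tria_*,\nu}}$ equals $\1_{T_{*,\nu}}$ (the corner subsimplex at $\nu$), which has measure $2^{-d}|T|$, so its average on $T$ is $2^{-d}$; hence the average of $\psi_{\tria,\nu}$ on $T$ is $2^{d+1}\cdot 2^{-d} - 1 = 1$. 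On simplices outside $\omega_{\tria,\nu}$ both terms vanish. Therefore $Q_\tria\psi_{\tria,\nu} = \1_{\omega_{\tria,\nu}}$, which is in $\U_\tria$; this also shows $\U_\tria \subseteq \ran Q_\tria$ trivially and that $\theta_{\tria,\nu} := \psi_{\tria,\nu} - \1_{\omega_{\tria,\nu}} = 2^{d+1}(\1_{\omega_{\tria_*,\nu}} - 2^{-d}\1_{\omega_{\tria,\nu}})$, a genuine bubble, supported in $\omega_{\tria,\nu}$ with zero average on every $T \in \tria$. The nesting $\W_\tria \subset \ZZ_\tria = \U_\tria + \B_\tria$ is then immediate since $\psi_{\tria,\nu} = \1_{\omega_{\tria,\nu}} + \theta_{\tria,\nu}$. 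Note $\B_\tria \subseteq \ran(\identity - Q_\tria)$ by construction of $\Theta_\tria$.

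Next I would check the three hypotheses of Lemma~\ref{lem:boundedW}. For \eqref{enum:approx}, the approximation property $\|h_\tria^{-1}(\identity - Q_\tria')\|_{\cL(H^1_{0,\gamma}(\Omega),L_2(\Omega))} \lesssim 1$: since $Q_\tria$ is $L_2$-selfadjoint, $Q_\tria' = Q_\tria$ is the $L_2$-projector onto piecewise constants, and this is the standard local approximation estimate $\|v - Q_\tria v\|_{L_2(T)} \lesssim h_T |v|_{H^1(T)}$ (Poincaré on each simplex, using uniform shape regularity), summed over $T$. For \eqref{enum:boundedl2}, boundedness of $Q_\tria|_{\ZZ_\tria}$ in $L_2(\Omega)$: here $Q_\tria$ is the $L_2$-orthogonal projector on all of $L_2(\Omega)$, so $\|Q_\tria\|_{\cL(L_2,L_2)} = 1$ and the bound holds with constant $1$ on any subspace. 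For \eqref{enum:invinequality}, the inverse inequality $\|h_\tria \cdot\|_{L_2(\Omega)} \lesssim \|\cdot\|_{H^1_{0,\gamma}(\Omega)'}$ on $\ZZ_\tria$: this is where the work is. An element of $\ZZ_\tria$ lives in $\Ss_{\tria_*}^{-1,0}$, a finite element space on the red-refinement $\tria_*$ which is uniformly shape regular with $h_{\tria_*} \eqsim h_\tria$; the inverse estimate $\|h_{\tria_*} z\|_{L_2(\Omega)} \lesssim \|z\|_{H^1_{0,\gamma}(\Omega)'}$ for piecewise polynomials on a shape-regular family is standard (e.g.\ via a local argument: on each patch, $\|z\|_{H^{-1}}$ dominates $\inf$ over test functions, and one constructs a local bump test function; or by interpolation between $\|h_\tria z\|_{L_2}^2 \lesssim \langle z, \cdot\rangle$-type pairings), and $h_{\tria_*}\eqsim h_\tria$ transfers it to $\tria$. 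I would cite the standard reference for this inverse inequality rather than reprove it.

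Then Lemma~\ref{lem:boundedW} yields conclusions (i) and (ii), in particular $\|\cdot\|_\W \eqsim \|h_\tria^s \cdot\|_{L_2(\Omega)}$ on $\B_\tria$. The remaining task is to show $\Theta_\tria = \{\theta_{\tria,\nu} : \nu \in N_\tria^0\}$ is a uniformly $\|h_\tria^s\cdot\|_{L_2(\Omega)}$-stable basis for $\B_\tria$. The key observations are: each $\theta_{\tria,\nu}$ is supported in $\omega_{\tria,\nu}$, and by uniform shape regularity each simplex $T \in \tria$ meets only a uniformly bounded number of patches $\omega_{\tria,\nu}$ (namely the $d+1$ patches of its own vertices); moreover on $\omega_{\tria,\nu}$ the weight $h_\tria^s$ is uniformly comparable to $h_{\tria,\nu}^s$ by the $K$-mesh property. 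So stability reduces to a finite-overlap / local-norm-equivalence argument: the upper bound $\|h_\tria^s\sum c_\nu\theta_{\tria,\nu}\|_{L_2}^2 \lesssim \sum |c_\nu|^2 \|h_\tria^s\theta_{\tria,\nu}\|_{L_2}^2$ follows from Cauchy--Schwarz and bounded overlap, and the lower bound follows if I show that for each fixed $\nu$ there is a simplex (or portion of $\omega_{\tria,\nu}$) on which $\theta_{\tria,\nu}$ is, up to constants, not too strongly cancelled by the other $\theta_{\tria,\mu}$; concretely, near $\nu$ only finitely many bubbles overlap and they are linearly independent with a uniformly conditioned Gram matrix (after scaling by $h^s$), which can be seen by a scaling/compactness argument on a reference patch configuration, or directly by noting $\theta_{\tria,\nu}$ restricted to $T_{*,\nu}$ captures a definite fraction of its mass while being the only bubble nonzero on... — actually more carefully, several $\theta_{\tria,\mu}$ are nonzero on $T_{*,\nu}$'s siblings, so one argues via the finite local Gram matrix. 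I expect this local stability of the bubble basis to be the main obstacle: the upper bound is routine finite overlap, but the lower bound requires a genuine (if standard) argument that the finitely-overlapping, $h^s$-scaled bubbles on each vertex patch form a uniformly well-conditioned system, most cleanly handled by reducing via affine scaling to finitely many reference configurations determined by shape regularity and a compactness/continuity argument, or by exhibiting for each $\nu$ an explicit dual functional (e.g.\ integration against a piecewise-constant function that is $1$ on $T_{*,\nu}$ and chosen to annihilate neighbouring bubbles' leading behaviour) with uniformly controlled norm.
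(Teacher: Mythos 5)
Your handling of the easy parts — the computation of $Q_\tria\psi_{\tria,\nu} = \1_{\omega_{\tria,\nu}}$, the nesting $\W_\tria \subset \U_\tria + \B_\tria$, the approximation property and $L_2$-boundedness of the orthogonal projector, and the observation that $\ZZ_\tria \subset \Ss_{\tria_*}^{-1,0}$ so the inverse inequality can be taken off the shelf — is correct and matches the paper. The genuine content of the proposition is the $\|h_\tria^s\cdot\|_{L_2(\Omega)}$-stability of $\Theta_\tria$, and there your proposal stalls: you list several possible strategies (compactness on reference patch configurations, explicit dual functionals, ``one argues via the finite local Gram matrix'') without committing to or executing any of them. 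That is the gap.

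The clean argument, which your compactness detour obscures, is per-simplex rather than per-patch, and relies on the fact that the local Gram matrix is a \emph{constant} matrix — it does not depend on the shape of $T$, only on $d$. Since each $\theta_\nu|_T$ is piecewise constant on the red-refined subsimplices of $T$ with a combinatorial structure fixed by which corner of $T$ is $\nu$, one computes directly (using $|T_{*,\nu}| = 2^{-d}|T|$ and that distinct corner subsimplices are disjoint)
\[
{\textstyle\frac{1}{4}}|T|^{-1}\langle\theta_\nu,\theta_{\nu'}\rangle_{L_2(T)} = \begin{cases} 2^d - 1 & \nu = \nu' \in N_T^0,\\ -1 & \nu \ne \nu' \in N_T^0.\end{cases}
\]
For $d \ge 2$ this $(d+1)\times(d+1)$ matrix is strictly diagonally dominant, hence uniformly positive definite, which gives the per-element norm equivalence $\|\sum_{\nu\in N_T^0} c_\nu\theta_\nu\|_{L_2(T)}^2 \eqsim \sum_{\nu\in N_T^0}|c_\nu|^2\|\theta_\nu\|_{L_2(T)}^2$ with absolute constants; multiplying by $h_T^{2s}$, summing over $T$, and interchanging the order of summation finishes the stability claim. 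No shape-regularity, scaling-to-reference, or compactness argument is needed for this step; the affine invariance of the piecewise-constant bubbles makes the local matrix literally the same on every element. Note also that this explicit computation is what reveals the failure for $d=1$ (the $2\times 2$ matrix $\bigl(\begin{smallmatrix}1&-1\\-1&1\end{smallmatrix}\bigr)$ is singular — indeed $\theta_a|_T = -\theta_b|_T$ on an interval), a degeneracy your hand-waved argument would not have detected and which the paper addresses in a separate remark by modifying $\Psi_\tria$.
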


\begin{proof}
The first statement follows from  $\W_\tria \subset L_2(\Omega)$.
The first two conditions of Lemma~\ref{lem:boundedW} are obviously valid.
Concerning the third condition, the inverse inequality $\|h_\tria \cdot\|_{L_2(\Omega)} \lesssim \| \cdot \|_{H_{0,\gamma}^1(\Omega)'}$ holds, for general $d$, on $\Ss_{\tria_{*}}^{-1, 0}$, see e.g.~\cite[Lem.~3.4]{249.97}, and thus in particular on $\ZZ_\tria$.
    The property $Q_\tria \psi_\nu=\1_{\omega_\nu}$ is easily checked.

We are left to show that the collection of bubbles $\{\theta_\nu:=(\identity-Q_\tria)\psi_\nu\colon \nu \in N_\tria^0\}$ is $\|h_\tria^s \cdot\|_{L_2(\Omega)}$-stable.
    Pick some $T \in \tria$, then the normalized `bubble element matrix' satisfies
    \begin{equation} \label{bubble-element}
    \begin{split}
      {\textstyle \frac{1}{4}} |T|^{-1} \langle \theta_\nu, \theta_{\nu'} \rangle_{L_2(T)} & = |T|^{-1}
        \langle 2^{d}\1_{\omega_{\tria_{*}, \nu}} - \1_{\omega_{\tria, \nu}},
        2^{d}\1_{\omega_{\tria_{*}, \nu'}} - \1_{\omega_{\tria, \nu'}} \rangle_{L_2(T)} \\
        & =
                \begin{cases}
            2^d - 1 & \nu = \nu'   \in N_T^0, \\
            - 1 & \nu \ne \nu'   \in N_T^0.
        \end{cases}
    \end{split}
    \end{equation}
    For $d \geq 2$, this constant (symmetric) $(d+1)\times(d+1)$ matrix is strictly diagonally dominant, and therefore positive definite. We conclude this proposition by
  \begin{align*}
        &\Big\|\sum_{\mathclap{\nu \in N_\tria^0}} h_\tria^s c_\nu \theta_\nu \Big\|^2_{L_2(\Omega)} = \sum_{T \in \tria} h_T^{2s} \Big\| \sum_{\mathclap{\nu \in N_T^0}} c_\nu \theta_\nu \Big\|^2_{L_2(T)}
       \eqsim  \sum_{T \in \tria} h_T^{2s} \sum_{\nu \in N_T^0} |c_\nu|^2 \|\theta_\nu\|_{L_2(T)}^2\\
        &= \sum_{\nu \in N_\tria^0} |c_\nu|^2 \sum_{T \in \tria} \|h_T^s \theta_\nu\|_{L_2(T)}^2=
        \sum_{\nu \in N_\tria^0} |c_\nu|^2 \|h_\tria^s \theta_\nu\|_{L_2(\Omega)}^2.\qedhere
   \end{align*}
\end{proof}
\begin{remark}
    For $d = 1$, the bubbles arising from $\Psi_\tria$ as given in~\eqref{eq:biorth_patch} do not form a $\|h_\tria^s\cdot\|_{L_2(\Omega)}$-stable collection.
    Instead, with $\tria_{**} \succ \tria$ being the two times uniform red-refinement, one can consider $\psi_{\tria, \nu} = \frac{16}{3} \1_{\omega_{\tria_{**}, \nu}} - \frac{1}{3} \1_{\omega_{\tria, \nu}}$ for which the statements of Lemma~\ref{prop:existence_psi} and
    Proposition~\ref{prop:existence_biorth_bases} are again valid.
\end{remark}

\subsubsection{Implementation}\label{sec:implementation_biorth_basis}
The matrix representation of preconditioner $\cF_{\Phi_\tria}^{-1} G_\tria (\cF'_{\Phi_\tria})^{-1}$ is given by
\[
    \bm{G}_\tria = \bm{D}^{-1}_\tria \bm{B}_\tria \bm{D}_\tria^{-\top}.
\]
With $\Psi_\tria$ as constructed in~\eqref{eq:biorth_patch}, we find that
$\bm{D}_\tria = \cF_{\Psi_\tria}' D_\tria \cF_{\Phi_\tria}$ is given by
\[
    \bm{D}_\tria = \diag \{ |\omega_\nu| \colon \nu \in N^0_\tria\}.
\]

Given some $B_\tria^\U \in \cLis(\U_\tria, \U_\tria')$
(recall that $\U_\tria = \Ss_\tria^{-1,0}$),
then by taking $B_\tria$ as described in~\eqref{defB}, we have
 \begin{align*}
\bm{B}_\tria &:= \cF'_{\Psi_\tria} B_\tria \cF_{\Psi_\tria}\\
             &= \cF'_{\Psi_\tria} (Q_\tria' B_\tria^\U Q_\tria + (\identity - Q_\tria)' B_\tria^\B (\identity - Q_\tria)) \cF_{\Psi_\tria}\\
             &=\bm{p}_\tria^\top \bm{B}_\tria^{\U} \bm{p}_\tria+ \bm{B}^{\B}_\tria,
\end{align*}
where, using that $\cF^{-1}_{\Theta_\tria}(\identity-Q_\tria)\cF_{\Psi_\tria} =\identity$ by $\Theta_\tria=(I-Q_\tria)\Psi_\tria$,
\[
\bm{B}_\tria^{\U}:=\cF_{\Sigma_\tria}' B_\tria^{\U}  \cF_{\Sigma_\tria}, \quad \bm{p}_\tria:=\cF^{-1}_{\Sigma_\tria}  Q_\tria\cF_{\Psi_\tria},\quad
\bm{B}^{\B}_\tria:=\cF_{\Theta_\tria}'B_\tria^{\B} \cF_{\Theta_\tria},
\]
Recall the canonical basis $\Sigma_\tria$ for $\U_\tria$ from~\eqref{sigma}.
Using $Q_\tria \psi_\nu = \1_{\omega_\nu}$ shows that
\[
(\bm{p}_\tria)_{T \nu} = \begin{cases} 1 & \text{if } T \subset \omega_\nu,\\
                                           0 & \text{else.}
                                        \end{cases}
\]
From~\eqref{bubble-element}, we infer that $\|h_\tria^s \theta_\nu\|_{L_2(\Omega)}^2 \eqsim |\omega_\nu|^{1+\frac{2s}{d}}$.
By making a harmless modification to the definition of $B_\tria^\B$ in~\eqref{eq:bubble_b} based on this equivalency, we obtain that
\[
\bm{B}_\tria^\B=\beta_1 \bm{D}_\tria^{1+\frac{2s}{d}}.
\]
The matrix $\bm{B}_\tria^{\U}$ depends on the operator $B_\tria^\U \in \cLis(\U_\tria, \U_\tria')$ that is selected.
The canonical choice is the Galerkin discretization operator on $\U_\tria$ of a $B \in \cLis(\W, \W')$.
The cost of the application of $\bm{G}_\tria$ is the cost of the application of $\bm{B}_\tria^{\U}$  plus cost that scales linearly in $\# \tria$.

\subsection{A space $\W_\tria$ decomposable into the continuous piecewise linears and bubbles}\label{sec:decompose_cont_pw_lin}
We follow the same program as in the previous subsection Sect.~\ref{sec:decompose_pw_const} but now with $\U_\tria:=\Ss_\tria^{0,1}$, being the space of continuous piecewise linears.

Other than in Sect.~\ref{sec:decompose_pw_const},
we cannot apply Proposition~\ref{proppie} for $Q_\tria$ being the orthogonal projector onto $\U_\tria$,
since with the current choice of this space it will not be a local projector.
As an alternative, we take $Q_\tria$ to be some biorthogonal projector. The question whether it enjoys an approximation property is answered in the following lemma.

\begin{lemma}\label{lemmie}
    For $\nu \in N_\tria$, so \emph{including} vertices on $\gamma$, let $\widetilde{\phi}_\nu \in L_2(\Omega)$ be such that
    \begin{equation} \label{properties}
    \| \widetilde{\phi}_\nu\|_{L_2(\Omega)} \lesssim h_\nu^{d/2}, \quad \sum\nolimits_{\nu \in N_\tria} \widetilde{\phi}_\nu = \1_{\Omega},
    \quad \supp \widetilde{\phi}_\nu \subset B(\nu;R h_\nu)
    \end{equation}
    for some constant $R>0$, and
      \[
        \big| \langle \widetilde{\phi}_\nu, \phi_{\nu'}\rangle_{L_2(\Omega)} \big|\eqsim \delta_{\nu \nu'} |\omega_\nu| \quad (\nu,\nu' \in N_\tria^0).
    \]
    Denote $\widetilde{\U}_\tria := \Span\{\widetilde{\phi}_\nu \colon \nu \in N_\tria^0\}$, so \emph{without} vertices on $\gamma$.
    The biorthogonal projector $Q_\tria \colon u \mapsto \sum_{\nu \in N_\tria^0} \frac{\langle u, \widetilde{\phi}_\nu\rangle_{L_2(\Omega)}}{\langle \phi_\nu, \widetilde{\phi}_\nu \rangle_{L_2(\Omega)}} \phi_\nu$, for which $\ran Q_\tria = \Ss_{\tria}^{0,1}$ and
    $\ran (\identity - Q_\tria) = \widetilde{\U}_\tria^{\perp_{L_2(\Omega)}}$, satisfies the approximation property
    \[
        \|h_\tria^{-1}(\identity - Q_\tria')v \|_{\cL(H_{0,\gamma}^1(\Omega),L_2(\Omega))} \lesssim 1,
    \]
    and $\|Q_\tria\|_{\cL(L_2(\Omega),L_2(\Omega))}\lesssim 1$.
\end{lemma}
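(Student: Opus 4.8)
The statement has two parts: the approximation property for $Q_\tria'$ and the $L_2$-boundedness of $Q_\tria$. I would treat the $L_2$-boundedness first, as it is the more routine of the two. Writing $Q_\tria u = \sum_{\nu \in N_\tria^0} \frac{\langle u, \widetilde{\phi}_\nu\rangle}{\langle \phi_\nu, \widetilde{\phi}_\nu\rangle}\phi_\nu$, I would estimate $\|Q_\tria u\|_{L_2(T)}$ for a fixed $T \in \tria$ by summing over the (boundedly many, by shape regularity) vertices $\nu \in N_T^0$. Using $\|\phi_\nu\|_{L_2(T)} \lesssim h_\nu^{d/2}$, the Cauchy--Schwarz bound $|\langle u, \widetilde{\phi}_\nu\rangle| \le \|u\|_{L_2(\supp \widetilde{\phi}_\nu)}\|\widetilde{\phi}_\nu\|_{L_2(\Omega)} \lesssim h_\nu^{d/2}\|u\|_{L_2(B(\nu;Rh_\nu))}$, and the hypothesis $|\langle \phi_\nu,\widetilde{\phi}_\nu\rangle| \eqsim |\omega_\nu| \eqsim h_\nu^d$, one gets $\|Q_\tria u\|_{L_2(T)} \lesssim \|u\|_{L_2(\omega_T^{(C)})}$ for a fixed dilation $\omega_T^{(C)}$ of $T$ (how many layers depends on $R$, but is bounded uniformly by the $K$-mesh property / shape regularity). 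Squaring and summing over $T \in \tria$, using finite overlap of the enlarged patches, yields $\|Q_\tria\|_{\cL(L_2(\Omega),L_2(\Omega))} \lesssim 1$.

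For the approximation property I would exploit the partition-of-unity hypothesis $\sum_{\nu \in N_\tria} \widetilde{\phi}_\nu = \1_\Omega$ together with the fact that $Q_\tria$ reproduces $\Ss_\tria^{0,1}$ (since $Q_\tria \phi_{\nu'} = \phi_{\nu'}$ by biorthogonality, for $\nu' \in N_\tria^0$, and this is where the normalization $\langle \phi_\nu,\widetilde\phi_\nu\rangle \eqsim |\omega_\nu|$ with the \emph{correct} sign/magnitude matters). The dual statement to prove is $\|h_\tria^{-1}(\identity - Q_\tria')v\|_{L_2(\Omega)} \lesssim \|v\|_{H^1_{0,\gamma}(\Omega)}$. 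I would prove this locally: for $T \in \tria$, since $(\identity - Q_\tria')v$ is tested against functions supported near $T$, and since $Q_\tria'$ annihilates the $L_2$-error against $\widetilde\U_\tria$, I can subtract any constant $c_T$ from $v$ on a neighbourhood of $T$ before applying $(\identity-Q_\tria')$ — more precisely, I would use that $Q_\tria' w$ on $T$ depends only on $w$ restricted to $\bigcup_{\nu \in N_T}\supp\widetilde\phi_\nu$, and that for a local constant $c$ one has $Q_\tria'(c\cdot) $ reproducing $c$ on $T$ thanks to $\sum_\nu \widetilde\phi_\nu = \1_\Omega$ (taking into account the boundary vertices; this is exactly why $\widetilde\phi_\nu$ is indexed over all of $N_\tria$ while $\widetilde\U_\tria$ only over $N_\tria^0$ — one must check the vertices on $\gamma$ contribute correctly or are harmless). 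Then $\|(\identity-Q_\tria')v\|_{L_2(T)} = \|(\identity - Q_\tria')(v - c_T)\|_{L_2(T)} \lesssim \|v - c_T\|_{L_2(\omega_T^{(C)})}$, and choosing $c_T$ the average of $v$ over $\omega_T^{(C)}$ and invoking a Poincaré inequality on the (shape-regular, connected) patch gives $\lesssim h_T \|v\|_{H^1(\omega_T^{(C)})}$. Dividing by $h_T$, squaring, summing over $T$ with finite overlap completes the estimate. By duality (adjoint of a bounded operator is bounded with the same norm, on the relevant spaces) this is equivalent to the claimed form with $Q_\tria'$.

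\textbf{Main obstacle.} The delicate point is the behaviour at the boundary $\gamma$. The functions $\widetilde\phi_\nu$ are prescribed for \emph{all} $\nu \in N_\tria$ so that they form a partition of unity, but $Q_\tria$ only sums over $\nu \in N_\tria^0$; correspondingly $\ran Q_\tria = \Ss_\tria^{0,1}$ (zero on $\gamma$) and $\ran(\identity - Q_\tria) = \widetilde\U_\tria^{\perp_{L_2}}$. When proving the local reproduction of constants (needed for the Poincaré step) one is testing against $\widetilde\phi_\nu$ only for interior $\nu$, so near $\gamma$ the sum $\sum_{\nu \in N_\tria^0}\widetilde\phi_\nu$ is \emph{not} $\1_\Omega$ — it is missing the boundary terms. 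One must therefore be careful: for $T$ touching $\gamma$, $Q_\tria$ does not reproduce constants, but it \emph{does} reproduce $\Ss_\tria^{0,1}$-functions, and those vanish on $\gamma$; so instead of subtracting a constant I would subtract the Scott--Zhang (or nodal-type) interpolant of $v$ in $\Ss_\tria^{0,1}$, or argue that on boundary elements $v$ itself is controlled by $h_T\|v\|_{H^1}$ via a Poincaré--Friedrichs inequality using the homogeneous condition on $\gamma$. Reconciling these two regimes (interior elements: subtract a constant; boundary elements: use $H^1_{0,\gamma}$ Friedrichs) in a uniform way, while keeping the locality and finite-overlap bookkeeping clean, is the part that requires genuine care; everything else is standard patch-wise estimation.
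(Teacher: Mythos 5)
Your overall strategy is sound and correctly identifies the central difficulty near $\gamma$, but it diverges from the paper's argument and one of your two proposed fixes for the boundary case is in fact wrong. The paper's proof is both shorter and handles the two regimes in one stroke: it defines a Scott--Zhang-type quasi-interpolator
\[
\Pi_\tria u := \sum_{\nu \in N_\tria} g_{\tria,\nu}(u)\,\widetilde{\phi}_{\tria,\nu},
\qquad g_{\tria,\nu}(u):=\fint_{e_\nu} u \,\dif s,
\]
built from the \emph{dual} functions $\widetilde{\phi}_\nu$ (indexed over \emph{all} of $N_\tria$), with the faces $e_\nu$ chosen so that $e_\nu\subset\gamma$ whenever $\nu\in\gamma$. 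Because $g_\nu(\1)=1$ and $\sum_\nu\widetilde\phi_\nu=\1$, one has the usual first-order approximation bound $\|h_\tria^{-1}(\identity-\Pi_\tria)u\|\lesssim\|u\|_{H^1(\Omega)}$; and because the functionals at boundary vertices vanish on $H^1_{0,\gamma}(\Omega)$, one gets $\Pi_\tria v\in\widetilde\U_\tria$ for $v\in H^1_{0,\gamma}(\Omega)$. The approximation property for $Q_\tria'$ then follows in one line by inserting $\Pi_\tria v$, since $Q_\tria'$ reproduces $\widetilde\U_\tria$, is $L_2$-bounded and local. Your element-wise two-regime argument (Poincar\'e away from $\gamma$, Friedrichs near $\gamma$) does lead to the same conclusion, but only the Friedrichs branch survives scrutiny; the other fix you floated — subtracting a Scott--Zhang interpolant of $v$ \emph{in $\Ss_\tria^{0,1}$} — does not work, because $Q_\tria'$ reproduces $\widetilde\U_\tria=\Span\{\widetilde\phi_\nu\}$, not $\Ss_\tria^{0,1}=\ran Q_\tria$. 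Any interpolant you subtract must land in $\widetilde\U_\tria$, which is exactly why the paper builds $\Pi_\tria$ out of the $\widetilde\phi_\nu$ rather than the $\phi_\nu$. Also note a smaller point for the interior regime: since the lemma only assumes $|\langle\phi_\nu,\widetilde\phi_\nu\rangle|\eqsim|\omega_\nu|$, one cannot conclude that $Q_\tria'$ literally reproduces constants on interior elements; what one can do is replace $c$ by $c\sum_{\nu\in N_\tria^0}\widetilde\phi_\nu\in\widetilde\U_\tria$, which agrees with $c$ on elements at a bounded distance from $\gamma$ — so your interior case goes through, but the mechanism is reproduction of $\widetilde\U_\tria$, not of constants. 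Your treatment of the $L_2$-boundedness of $Q_\tria$ matches the paper's (it simply points to the proof of Theorem~\ref{thm:boundedbiorth}).
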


\begin{proof}
      We use the same strategy as in~\cite{249.97}. That is, we define a Scott-Zhang type quasi-interpolator $\Pi_\tria \colon H^1(\Omega) \to L_2(\Omega)$, cf.~\cite{247.2}.
    For every $\nu \in N_\tria$, select a $(d-1)$-face $e_\nu$ of some $T \in \tria$ with $\nu \in e_\nu$ and $e_\nu \subset \gamma$ if $\nu \in \gamma$. We define $\Pi_\tria$ by
    \[
        \Pi_\tria u := \sum_{\nu \in N_\tria} g_{\tria, \nu}(u) \widetilde{\phi}_{\tria, \nu}, \quad g_{\tria,\nu}(u):=\fint_{e_\nu} u \, \dif s.
    \]
    Since $g_{\tria, \nu}(\1)=1$, using the properties from~\eqref{properties} one can show, cf.~proof of~\cite[Thm.~3.2]{249.97} for details, that
    \[
        \|h_\tria^{-1}(\identity - \Pi_\tria)(u)\| \lesssim \|u\|_{H^1(\Omega)} \quad (u \in H^1(\Omega)).
    \]
    By construction, $g_{\tria, \nu}(u) = 0$ for $\nu$ on $\gamma$ and $u \in H^1_{0, \gamma}(\Omega)$,
    and therefore $\ran \Pi_\tria|_{H_{0,\gamma}^1(\Omega)} \subset \widetilde{\U}_\tria$.
    Finally, combined with $L_2(\Omega)$-boundedness and locality of $Q_\tria'$, and the fact that $Q_\tria'$ reproduces $\widetilde{\U}_\tria$, we find that
    \begin{align*}
        \| h_\tria^{-1}(\identity - Q_\tria')v\|_{L_2(\Omega)} &= \inf_{w_\tria \in \widetilde{\U}_\tria} \| h_\tria^{-1}(\identity - Q_\tria')(v - w_\tria) \|_{L_2(\Omega)} \\
                                                               &\lesssim \|h_\tria^{-1}(\identity - \Pi_\tria)(v) \|_{L_2(\Omega)} \lesssim \| v\|_{H^1_{0,\gamma}(\Omega)} \quad (v \in H^1_{0,\gamma}(\Omega)).
    \end{align*}
    The last statement can be proven similarly as in the proof of Theorem~\ref{thm:boundedbiorth}.
\end{proof}

As before, let $\tria_* \succ \tria$ denote a uniform red-refinement of $\tria$, and
for any $T \in \tria$ and $\nu \in N_T$, let $T_{*,\nu} \in \tria_*$ denote the simplex with $\nu \in T_{*,\nu} \subset T$.
For $\nu \in N_\tria$, so including boundary vertices, define
    \[
        \widetilde{\phi}_{\tria, \nu} := {\textstyle \frac{1}{d+1}} \sum_{\substack{T \in \tria \\ T \subset \omega_\nu}}\big( \1_T + {\textstyle \frac{d 2^{1+d}}{d+1}}  \1_{T_{*,\nu}} - {\textstyle \frac{2^{1+d}}{d+1}} \sum_{\substack{\nu' \in N_T\\ \nu' \ne \nu}} \1_{T_{*,\nu'}}\big) \in \Ss_{\tria_*}^{-1,0}.
    \]
    These functions satisfy~\eqref{properties}, and
    \[
        \langle \widetilde{\phi}_{\tria, \nu}, \phi_{\tria, \nu'} \rangle_{L_2(\Omega)} = \delta_{\nu \nu'} (d+1)^{-1} |\omega_{\tria, \nu}|,
           \]
           and so determine a valid biorthogonal projector $Q_\tria$ via Lemma~\ref{lemmie}.

    For $\tria_{**} \succ \tria_*$ a uniform red-refinement of~$\tria_*$, we define $\Theta_\tria:=\{ \theta_{\tria,\nu}\colon \nu \in N_\tria^0\}$ by
    \[
        \theta_{\tria, \nu} := {\textstyle \frac{2^{d+2}}{d+2}} \big(2^{d}\1_{\omega_{\tria_{**}, \nu}} -  \1_{\omega_{\tria_*, \nu}}\big).
    \]
    Since red-refinement subdivides each simplex into $d$ subsimplices, one infers that
    \begin{equation} \label{bubble-orthog}
    \B_\tria:=\Span \Theta_\tria \perp_{L_2(\Omega)} \Ss_{\tria_*}^{-1,0},
    \end{equation}
    so that in particular $\B_\tria \subset \ker Q_\tria$.

Defining $\Psi_\tria := \{\psi_{\tria, \nu}\colon \nu \in N_\tria^0\}$ by
 \[
 \psi_{\tria, \nu} := \phi_{\tria, \nu} +\theta_{\tria, \nu},
 \]
calculations as in the proof of Lemma~\ref{prop:existence_psi} show the following result.
 \begin{lemma}
    The collection $\Psi_\tria$ satisfies Assumption~\eqref{eq:assumption_psi} with ${\supp \psi_{\tria,\nu} = \omega_{\tria,\nu}}$ and
    \begin{equation}\label{eq:biorth_cons}
            \langle \psi_{\tria, \nu}, \phi_{\tria, \nu'} \rangle_{L_2(\Omega)} = \delta_{\nu \nu'} (d+1)^{-1} |\omega_{\tria,\nu}| \quad(\nu,\nu' \in N_\tria^0).
     \end{equation}
\end{lemma}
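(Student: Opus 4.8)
The plan is to prove the two claimed properties of $\Psi_\tria = \{\psi_{\tria,\nu} = \phi_{\tria,\nu} + \theta_{\tria,\nu} : \nu \in N_\tria^0\}$ separately: first the support statement, then the biorthogonality relation~\eqref{eq:biorth_cons}, from which Assumption~\eqref{eq:assumption_psi} follows immediately. The support claim $\supp\psi_{\tria,\nu} = \omega_{\tria,\nu}$ is the easy warm-up: $\supp\phi_{\tria,\nu} = \omega_{\tria,\nu}$ by definition of the nodal basis, and $\theta_{\tria,\nu}$ is a linear combination of $\1_{\omega_{\tria_{**},\nu}}$ and $\1_{\omega_{\tria_*,\nu}}$, both supported in $\omega_{\tria_*,\nu}\subseteq\omega_{\tria,\nu}$ (since $\tria_{**}\succ\tria_*\succ\tria$ are red-refinements, the patch of $\nu$ only shrinks). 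Hence $\supp\psi_{\tria,\nu}\subseteq\omega_{\tria,\nu}$; the reverse inclusion holds because $\phi_{\tria,\nu}$ is already nonzero on a dense subset of $\omega_{\tria,\nu}$ wherever $\theta_{\tria,\nu}$ cannot cancel it, which is an a.e.\ statement.

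For the biorthogonality, the key observation is the orthogonality~\eqref{bubble-orthog}: $\B_\tria = \Span\Theta_\tria \perp_{L_2(\Omega)} \Ss^{-1,0}_{\tria_*}$. Since $\phi_{\tria,\nu'}$ restricted to any $T\in\tria$ is affine, and $\Ss^{-1,0}_{\tria_*}$ contains the piecewise constants on $\tria_*$, the pairing $\langle\theta_{\tria,\nu},\phi_{\tria,\nu'}\rangle_{L_2(\Omega)}$ would vanish if $\phi_{\tria,\nu'}$ were piecewise constant on $\tria_*$ — it is not, but one can write $\phi_{\tria,\nu'} = \bar\phi + (\phi_{\tria,\nu'}-\bar\phi)$ where $\bar\phi\in\Ss^{-1,0}_{\tria_*}$ is its $\tria_*$-piecewise average, and check that the remainder integrates against $\theta_{\tria,\nu}$ to zero as well by a symmetric red-refinement computation (this is exactly the ``calculations as in the proof of Lemma~\ref{prop:existence_psi}'' the statement refers to — the weights $\frac{2^{d+2}}{d+2}(2^d\,\cdot\,-\,\cdot\,)$ are chosen precisely so the linear part of $\phi_{\tria,\nu'}$ also produces zero). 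Thus $\langle\theta_{\tria,\nu},\phi_{\tria,\nu'}\rangle_{L_2(\Omega)} = 0$ for all $\nu,\nu'\in N_\tria^0$, so that
\[
\langle\psi_{\tria,\nu},\phi_{\tria,\nu'}\rangle_{L_2(\Omega)} = \langle\phi_{\tria,\nu},\phi_{\tria,\nu'}\rangle_{L_2(\Omega)} + \langle\theta_{\tria,\nu},\phi_{\tria,\nu'}\rangle_{L_2(\Omega)} = \langle\phi_{\tria,\nu},\phi_{\tria,\nu'}\rangle_{L_2(\Omega)}.
\]
But the right-hand side is \emph{not} diagonal (the mass matrix of the nodal hats has off-diagonal entries), so the $\theta$-term cannot simply vanish — I must instead show that $\langle\theta_{\tria,\nu},\phi_{\tria,\nu'}\rangle_{L_2(\Omega)} = \delta_{\nu\nu'}(d+1)^{-1}|\omega_{\tria,\nu}| - \langle\phi_{\tria,\nu},\phi_{\tria,\nu'}\rangle_{L_2(\Omega)}$, i.e.\ $\theta$ is engineered to \emph{diagonalize} the pairing, killing all off-diagonal mass-matrix contributions and correcting the diagonal to the stated value. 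This is the content of the cited "calculations as in Lemma~\ref{prop:existence_psi}": compute $\langle\1_T,\phi_{\tria,\nu'}\rangle$, $\langle\1_{T_{*,\nu}},\phi_{\tria,\nu'}\rangle$ and $\langle\1_{\omega_{\tria_*,\nu}},\phi_{\tria,\nu'}\rangle$ etc.\ on each $T\subset\omega_{\tria,\nu'}\cap\omega_{\tria,\nu}$ using the refinement equation $\1_{T_{*,\mu}}$ paired against hats, sum over $T$, and verify the algebra collapses to $\delta_{\nu\nu'}(d+1)^{-1}|\omega_{\tria,\nu}|$.

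The main obstacle is purely bookkeeping: carrying out the red-refinement integrals $\langle\1_{T_{*,\mu}},\phi_{\tria,\nu'}\rangle_{L_2(\Omega)}$ correctly for all relative positions of $\mu,\nu,\nu'$ among the vertices of $T$ (the cases $\mu=\nu=\nu'$, $\mu=\nu\neq\nu'$, $\mu\neq\nu=\nu'$, all distinct), and then summing over $T$ in the patch while tracking that the total coefficient of every $|T|$ comes out right. The structural ideas — support localization and the orthogonality to $\Ss^{-1,0}_{\tria_*}$ — are straightforward; I expect no conceptual difficulty, only the need for care with the dimension-dependent constants $\frac{1}{d+1}$, $\frac{d2^{1+d}}{d+1}$, $\frac{2^{d+2}}{d+2}$, which are exactly what makes the final answer diagonal with value $(d+1)^{-1}|\omega_{\tria,\nu}|$.
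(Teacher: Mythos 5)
Your overall route agrees with the paper's (a direct red‑refinement computation mirroring the proof of Lemma~\ref{prop:existence_psi}), and the statement you ultimately set out to verify --- that $\langle\theta_{\tria,\nu},\phi_{\tria,\nu'}\rangle_{L_2(\Omega)}=\delta_{\nu\nu'}(d+1)^{-1}|\omega_{\tria,\nu}|-\langle\phi_{\tria,\nu},\phi_{\tria,\nu'}\rangle_{L_2(\Omega)}$ --- is the correct target. However, the middle of your argument is internally inconsistent: you first assert that the weights in $\theta_{\tria,\nu}$ are chosen ``precisely so the linear part of $\phi_{\tria,\nu'}$ also produces zero,'' concluding $\langle\theta_{\tria,\nu},\phi_{\tria,\nu'}\rangle_{L_2(\Omega)}=0$, and only afterwards observe that this is incompatible with the target. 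That first assertion is simply false, and if followed through it would derail the proof. Orthogonality~\eqref{bubble-orthog} kills only the $\Ss^{-1,0}_{\tria_*}$‑piecewise average $\bar\phi$ of $\phi_{\tria,\nu'}$: the coefficients $2^d$ and $-1$ inside the parenthesis enforce this because red‑refinement shrinks volumes by the factor $2^{-d}$, so each $T'\in\tria_*$ contributes $2^d\cdot 2^{-d}|T'|-|T'|=0$. The outer prefactor $\tfrac{2^{d+2}}{d+2}$ is then calibrated so that the \emph{nonzero} pairing of $\theta_{\tria,\nu}$ with the linear remainder $\phi_{\tria,\nu'}-\bar\phi$ cancels the off‑diagonal hat mass‑matrix entries exactly and pushes the diagonal entry to $(d+1)^{-1}|\omega_{\tria,\nu}|$.

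For the record, the per‑triangle bookkeeping you defer is short. On $T\in\tria$ with $\nu,\nu'\in N_T$, applying the refinement equation twice (as in Lemma~\ref{prop:existence_psi}) gives
\begin{align*}
\langle\1_{T_{*,\nu}},\phi_{\tria,\nu}\rangle_{L_2(T)}&=\frac{2^{-d}|T|}{d+1}\cdot\frac{d+2}{2},&
\langle\1_{T_{**,\nu}},\phi_{\tria,\nu}\rangle_{L_2(T)}&=\frac{2^{-2d}|T|}{d+1}\cdot\frac{3d+4}{4},\\
\langle\1_{T_{*,\nu}},\phi_{\tria,\nu'}\rangle_{L_2(T)}&=\frac{2^{-d}|T|}{d+1}\cdot\frac{1}{2},&
\langle\1_{T_{**,\nu}},\phi_{\tria,\nu'}\rangle_{L_2(T)}&=\frac{2^{-2d}|T|}{d+1}\cdot\frac{1}{4}\quad(\nu'\neq\nu),
\end{align*}
whence $\langle\theta_{\tria,\nu},\phi_{\tria,\nu}\rangle_{L_2(T)}=\frac{d\,|T|}{(d+1)(d+2)}$ and $\langle\theta_{\tria,\nu},\phi_{\tria,\nu'}\rangle_{L_2(T)}=-\frac{|T|}{(d+1)(d+2)}$, neither of which vanishes. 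Adding the mass‑matrix contributions $\langle\phi_{\tria,\nu},\phi_{\tria,\nu}\rangle_{L_2(T)}=\frac{2|T|}{(d+1)(d+2)}$ and $\langle\phi_{\tria,\nu},\phi_{\tria,\nu'}\rangle_{L_2(T)}=\frac{|T|}{(d+1)(d+2)}$ and summing over $T\subset\omega_{\tria,\nu}$ gives exactly~\eqref{eq:biorth_cons}. Your support argument is fine.
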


So the Fortin interpolator is uniformly bounded, and $D_\tria$ is represented by a diagonal matrix. Next we verify
the conditions imposed in Sect.~\ref{stabledecomp} for the construction of $B_\tria$.

\begin{proposition}\label{prop:existence_biorth_costruction_cont}
     Let $\U_\tria$, $Q_\tria$, $\B_\tria$, and $\W_\tria := \Span \Psi_\tria$ be defined as above.
Then $\W_\tria \subset \ZZ_\tria:=\U_\tria+\B_\tria$ {\rm(\eqref{nesting})}, the conditions of Lemma~\ref{lem:boundedW} are satisfied,
in particular $\Phi_\tria = Q_\tria \Psi_\tria$ and so $\Theta_\tria =  (\identity - Q_\tria) \Psi_\tria$,
and lastly, $\Theta_\tria$ is an $\|h_\tria^s \cdot\|_{L_2(\Omega)}$-orthogonal basis for $\B_\tria$ as required for Lemma~\ref{lem:diagonal_b}.
\end{proposition}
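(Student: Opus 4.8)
The plan is to follow the same program that worked in Sect.~\ref{sec:decompose_pw_const}, verifying each of the three conditions of Lemma~\ref{lem:boundedW} together with the $\|h_\tria^s\cdot\|_{L_2(\Omega)}$-stability of $\Theta_\tria$. First I would establish the nesting \eqref{nesting}: by construction $\U_\tria=\Ss_\tria^{0,1}\subset L_2(\Omega)$ and $\B_\tria=\Span\Theta_\tria\subset\Ss_{\tria_{**}}^{-1,0}\subset L_2(\Omega)$, and since $\psi_{\tria,\nu}=\phi_{\tria,\nu}+\theta_{\tria,\nu}$ we have $\W_\tria\subset\U_\tria+\B_\tria=\ZZ_\tria$. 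Next, because $\B_\tria\perp_{L_2(\Omega)}\Ss_{\tria_*}^{-1,0}\supset\Ss_\tria^{-1,0}\supset\{\1_T\}$ (the orthogonality \eqref{bubble-orthog}, which follows from the fact that each $\theta_{\tria,\nu}$ is a weighted difference of patch indicators whose integral over every $T\in\tria_*$ vanishes), each bubble $\theta_{\tria,\nu}$ has zero $L_2(\Omega)$-inner product with every $\phi_{\tria,\nu'}$ as well (since $\phi_{\tria,\nu'}$ is piecewise linear and its product with $\theta_{\tria,\nu}$ integrates to zero on each simplex of $\tria_*$ by the mean-value property). Hence $\langle\psi_{\tria,\nu},\phi_{\tria,\nu'}\rangle_{L_2(\Omega)}=\langle\phi_{\tria,\nu},\phi_{\tria,\nu'}\rangle_{L_2(\Omega)}$ and, more importantly, $Q_\tria\psi_{\tria,\nu}=Q_\tria\phi_{\tria,\nu}+Q_\tria\theta_{\tria,\nu}=\phi_{\tria,\nu}$ because $Q_\tria$ reproduces $\Ss_\tria^{0,1}$ and $\theta_{\tria,\nu}\in\ker Q_\tria$. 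This gives $\Phi_\tria=Q_\tria\Psi_\tria$, and therefore $\Theta_\tria=(\identity-Q_\tria)\Psi_\tria$.

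For the conditions of Lemma~\ref{lem:boundedW}: condition \eqref{enum:approx} (the approximation property of $Q_\tria'$) is exactly what Lemma~\ref{lemmie} delivers, once one checks that the explicitly given $\widetilde{\phi}_{\tria,\nu}$ satisfy the hypotheses \eqref{properties} of that lemma — i.e.\ the $L_2$-size bound $\|\widetilde{\phi}_{\tria,\nu}\|_{L_2(\Omega)}\lesssim h_\nu^{d/2}$, the partition-of-unity identity $\sum_{\nu\in N_\tria}\widetilde{\phi}_{\tria,\nu}=\1_\Omega$, the support condition, and the biorthogonality $\langle\widetilde{\phi}_{\tria,\nu},\phi_{\tria,\nu'}\rangle_{L_2(\Omega)}=\delta_{\nu\nu'}(d+1)^{-1}|\omega_{\tria,\nu}|$ (this last is stated just before the proposition and is a direct computation using the refinement equation for the nodal hats on $\tria_*$, as in the proof of Lemma~\ref{prop:existence_psi}). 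Condition \eqref{enum:boundedl2}, $L_2(\Omega)$-boundedness of $Q_\tria|_{\ZZ_\tria}$, follows from the last assertion of Lemma~\ref{lemmie}, $\|Q_\tria\|_{\cL(L_2(\Omega),L_2(\Omega))}\lesssim 1$. Condition \eqref{enum:invinequality}, the inverse inequality $\|h_\tria\cdot\|_{L_2(\Omega)}\lesssim\|\cdot\|_{H_{0,\gamma}^1(\Omega)'}$ on $\ZZ_\tria=\Ss_\tria^{0,1}+\B_\tria\subset\Ss_{\tria_{**}}^{-1,0}+\Ss_\tria^{0,1}$, holds because the inverse inequality is valid on $\Ss_{\tria_{**}}^{-1,0}$ (cite \cite[Lem.~3.4]{249.97}) and on $\Ss_\tria^{0,1}$, and shape-regularity makes $h_\tria$, $h_{\tria_*}$, $h_{\tria_{**}}$ uniformly comparable; alternatively one notes $\ZZ_\tria\subset\Ss_{\tria_{**}}^{-1,0}+\Ss_{\tria_{**}}^{0,1}$ and uses the inverse estimate there.

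It remains to prove that $\Theta_\tria$ is a $\|h_\tria^s\cdot\|_{L_2(\Omega)}$-orthogonal — hence in particular uniformly stable — basis for $\B_\tria$. The bubbles $\theta_{\tria,\nu}$ and $\theta_{\tria,\nu'}$ for $\nu\ne\nu'$ in $N_\tria^0$ have supports $\omega_{\tria_*,\nu}$ and $\omega_{\tria_*,\nu'}$ which meet, if at all, only in a set of measure zero (each $\theta_{\tria,\nu}$ lives on the simplices of $\tria_*$ touching $\nu$, and after one further red-refinement distinct vertices of $\tria_*$ no longer share a simplex once $\nu,\nu'$ are distinct vertices of $\tria$ — this uses that $\nu,\nu'\in N_\tria$ are not adjacent in $\tria_*$ to a common sub-simplex of $\tria_{**}$ lying in both patches), so $\langle\theta_{\tria,\nu},\theta_{\tria,\nu'}\rangle_{L_2(\Omega)}=0$; then for any constant $c_\nu$, $\|h_\tria^s\sum_\nu c_\nu\theta_{\tria,\nu}\|_{L_2(\Omega)}^2=\sum_\nu|c_\nu|^2\|h_\tria^s\theta_{\tria,\nu}\|_{L_2(\Omega)}^2$ exactly, after splitting the integral over the simplices of $\tria$ and using that $h_\tria$ is comparable to a constant on each patch (shape regularity). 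I expect the main obstacle to be precisely this last orthogonality claim: one must argue carefully, using the combinatorics of two successive red-refinements, that the supports of bubbles at distinct interior vertices of $\tria$ do not overlap on a set of positive measure (or, if they can overlap, that the contributions still cancel), so that the element-level computation is genuinely diagonal rather than merely strictly diagonally dominant as in \eqref{bubble-element}. Once that is settled, the remaining estimates are routine bookkeeping with shape-regularity constants, and the proposition follows.
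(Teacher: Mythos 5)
Your proposal follows the same route as the paper's proof, and the main structure is sound: nesting via $\psi_{\tria,\nu}=\phi_{\tria,\nu}+\theta_{\tria,\nu}$, the approximation property and $L_2$-boundedness from Lemma~\ref{lemmie}, the inverse inequality from $\ZZ_\tria\subset \Ss_{\tria_{**}}^{-1,1}$, $Q_\tria\theta_{\tria,\nu}=0$ from \eqref{bubble-orthog} together with $\widetilde\phi_{\tria,\nu}\in\Ss_{\tria_*}^{-1,0}$, and $\|h_\tria^s\cdot\|_{L_2(\Omega)}$-orthogonality from disjointness (up to measure zero) of the bubble supports.

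Two issues. First, the side claim $\langle\theta_{\tria,\nu},\phi_{\tria,\nu'}\rangle_{L_2(\Omega)}=0$ for all $\nu,\nu'$ is false, and your appeal to a ``mean-value property'' does not yield it: that $\theta_{\tria,\nu}$ has zero mean on each $T\in\tria_*$ kills $\langle\theta_{\tria,\nu},g\rangle_{L_2(T)}$ only when $g$ is \emph{constant} on $T$, not when $g$ is merely linear. Moreover the claim would contradict the biorthogonality relation \eqref{eq:biorth_cons}: for adjacent $\nu\ne\nu'$ one has $\langle\phi_{\tria,\nu},\phi_{\tria,\nu'}\rangle_{L_2(\Omega)}>0$, so $\langle\theta_{\tria,\nu},\phi_{\tria,\nu'}\rangle_{L_2(\Omega)}=-\langle\phi_{\tria,\nu},\phi_{\tria,\nu'}\rangle_{L_2(\Omega)}\ne 0$; the bubbles are precisely calibrated to cancel the off-diagonal hat-overlaps, not to be orthogonal to the hats. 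This error does not propagate, because the fact you actually need, namely $Q_\tria\theta_{\tria,\nu}=0$, follows (as you also state) from $\langle\theta_{\tria,\nu},\widetilde\phi_{\tria,\nu'}\rangle_{L_2(\Omega)}=0$, which holds since $\widetilde\phi_{\tria,\nu'}\in\Ss_{\tria_*}^{-1,0}$ and \eqref{bubble-orthog}. Second, your worry about the orthogonality of $\Theta_\tria$ is unfounded and the hedging is unnecessary: $\supp\theta_{\tria,\nu}=\omega_{\tria_*,\nu}$, and for distinct $\nu,\nu'\in N_\tria$ the sets $\omega_{\tria_*,\nu}$ and $\omega_{\tria_*,\nu'}$ share at most a $(d-1)$-dimensional face of a $\tria_*$-simplex (each $T\in\tria$ containing both $\nu$ and $\nu'$ contributes the two disjoint corner sub-simplices $T_{*,\nu}$ and $T_{*,\nu'}$), hence $|\supp\theta_{\tria,\nu}\cap\supp\theta_{\tria,\nu'}|=0$, which is exactly the one-line observation the paper makes; there is no positive-measure overlap or cancellation argument needed.
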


\begin{proof} The first statement is obviously true. We have already verified the first two conditions of Lemma~\ref{lem:boundedW}. The third
    condition follows from this inverse inequality on $\Ss_{\tria_{**}}^{-1,1}$ (see e.g.~\cite[(5.14)]{249.97}), and
   $\Phi_\tria = Q_\tria \Psi_\tria$ is a consequence of~\eqref{bubble-orthog}.
The last statement follows from $|\supp \theta_{\nu}\cap \supp \theta_{\nu'}|=0$ when $\nu \neq \nu'$.
\end{proof}

\subsubsection{Implementation}\label{sec:implementation_biorth_basis_cont}
Suppose that we have some operator $B_\tria^\U \in \cLis(\U_\tria, \U_\tria')$ uniformly (here $\U_\tria = \Ss_\tria^{0,1}$).
The matrix representation of the preconditioner $G_\tria$, with
$B_\tria$ from~\eqref{defB} and
the bases from Proposition~\ref{prop:existence_biorth_costruction_cont}, becomes
\begin{align*}
\bm{G}_\tria &= \bm{D}^{-1}_\tria \bm{B}_\tria \bm{D}_\tria^{-\top},\\
\bm{B}_\tria &:= \cF'_{\Psi_\tria} (Q_\tria' B_\tria^\U Q_\tria + (\identity - Q_\tria)' B_\tria^\B (\identity - Q_\tria)) \cF_{\Psi_\tria}\\
& =\bm{B}_\tria^{\U} + \bm{B}^{\B}_\tria,
\end{align*}
 with these matrices given by
\begin{align*}
    \bm{D}_\tria &:= \cF_{\Psi_\tria}' D_\tria \cF_{\Phi_\tria} = \diag \big\{ {\textstyle \frac{|\omega_\nu|}{d+1}} \colon \nu \in N^0_\tria\big\},\\
    \bm{B}_\tria^{\U}&:=\cF_{\Phi_\tria}' B_\tria^{\U}  \cF_{\Phi_\tria}, \quad \bm{B}^{\B}_\tria:= \cF_{\Theta_\tria}'B_\tria^{\B} \cF_{\Theta_\tria}= \beta_1 \bm{D}_\tria^{1+ \frac{2s}{d}},
\end{align*}
where we used that $\cF^{-1}_{\Phi_\tria}  Q_\tria\cF_{\Psi_\tria} = \bm{\identity}$ and $\cF^{-1}_{\Theta_\tria}(\identity-Q_\tria)\cF_{\Psi_\tria} = \bm{\identity}$, and
where, based on $\|h_\tria^s \theta_\nu\|_{L_2(\Omega)}^2 \eqsim |\omega_\nu|^{1+\frac{2s}{d}}$,
we made an harmless modification to the operator $B_\tria^{\B}$ from Lemma~\ref{lem:diagonal_b}.

\section{Extensions}\label{sec:extensions}

\subsection{Higher order}
Add the superscript $1$ to the spaces defined so far,
e.g.~write $\V_\tria^1$ for $\Ss_\tria^{0,1}$ with its nodal basis $\Phi_\tria^1$, and similarly use $G_\tria^1$ for the associated preconditioner from either Sect.~\ref{sec:decompose_pw_const} or Sect.~\ref{sec:decompose_cont_pw_lin}.

We  will now consider a (family of) higher order continuous piecewise polynomials, i.e.~for some $\ell \in \{2, 3, \dots \}$ let
\[
    \V_\tria = \Ss_\tria^{0, \ell} := \{ u \in H_{0,\gamma}^1(\Omega) \colon u|_T \in \cP_\ell \,(T \in \tria)\} \subset \V.
\]
Because we have an inverse inequality on $\V_\tria$, we can construct a uniform preconditioner $G_\tria \in \Lis(\V_\tria', \V_\tria)$ using an additive subspace correction method.
That is, we consider
the overlapping decomposition $\V_\tria = \V_\tria^1 + \V_\tria^2$, where
these spaces are given by
\[
    \V_\tria =  (\V_\tria, \| \cdot \|_\V), \quad \V_\tria^1 = (\V_\tria^1, \|\cdot \|_\V), \quad \V_\tria^2 = (\V_\tria, \|h_\tria^{-s} \cdot \|_{L_2(\Omega)}).
\]
\begin{proposition}
For $k\in \{1,2\}$, let $G_\tria^k \in \cLis((\V_\tria^k)', \V_\tria^k)$, then for $I_\tria^k : \V_\tria^k \to \V_\tria$ the trivial embedding, we find that
$G_\tria := \sum_{k = 1}^2 I_\tria^k G_\tria^k (I_\tria^k)' \in \cLis(\V_\tria', \V_\tria)$,
with
\begin{align*}
    \|G_\tria\|_{\cL(\V_\tria', \V_\tria)} &\lesssim \max_{k = 1,2} \|G_\tria^k\|_{\cL((\V_\tria^k)', \V_\tria^k)},\\
    \|\Re(G_\tria)^{-1}\|_{\cL(\V_\tria,\V_\tria')} &\lesssim \max_{k=1,2} \| \Re(G_\tria^k)^{-1}\|_{\cL(\V_\tria^k, (\V_\tria^k)')}.
\end{align*}
\end{proposition}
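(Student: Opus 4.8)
The plan is to apply the standard theory of additive Schwarz / parallel subspace correction methods, which characterizes the operator $G_\tria = \sum_{k=1}^2 I_\tria^k G_\tria^k (I_\tria^k)'$ through its quadratic form. First I would note that, since each $G_\tria^k \in \cLis((\V_\tria^k)',\V_\tria^k)$, the inverse $(G_\tria^k)^{-1} \in \cLis(\V_\tria^k, (\V_\tria^k)')$, so it suffices to work with the symmetric parts and estimate $\Re(G_\tria^k)$ and $\Re(G_\tria^k)^{-1}$. The key identity is that for the additive combination, the symmetrized inverse is governed by
\[
\big(\Re(G_\tria)^{-1} v\big)(v) \eqsim \inf_{\substack{v = v^1 + v^2 \\ v^k \in \V_\tria^k}} \sum_{k=1}^2 \big(\Re(G_\tria^k)^{-1} v^k\big)(v^k),
\]
with constants depending only on the norm-equivalence constants between $\|\cdot\|_\V$ on $\V_\tria$ and the two subspace norms; the boundedness estimate $\|G_\tria\|_{\cL(\V_\tria',\V_\tria)} \lesssim \max_k \|G_\tria^k\|$ is the easy direction, following from the triangle inequality and boundedness of the embeddings $I_\tria^k$ together with their adjoints $(I_\tria^k)'$.

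The crux is therefore to verify the \emph{stable decomposition} property: every $v \in \V_\tria$ admits a splitting $v = v^1 + v^2$ with $v^1 \in \V_\tria^1 = (\Ss_\tria^{0,1}, \|\cdot\|_\V)$, $v^2 \in \V_\tria^2 = (\V_\tria, \|h_\tria^{-s}\cdot\|_{L_2(\Omega)})$, such that $\|v^1\|_\V^2 + \|h_\tria^{-s} v^2\|_{L_2(\Omega)}^2 \lesssim \|v\|_\V^2$. Here I would take $v^1 := \Pi_\tria v$ for $\Pi_\tria$ a suitable quasi-interpolator onto $\Ss_\tria^{0,1}$ (e.g. Scott--Zhang, as already used in the proof of Theorem~\ref{thm:boundedbiorth}) and $v^2 := (\identity - \Pi_\tria) v$. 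Then $\|v^1\|_\V \lesssim \|v\|_\V$ by $H^1$- and $L_2$-stability of $\Pi_\tria$ plus interpolation, while the remainder term requires $\|h_\tria^{-s}(\identity - \Pi_\tria)v\|_{L_2(\Omega)} \lesssim \|v\|_\V$. The latter follows by interpolating between the case $s=0$ (trivial: $L_2$-boundedness of $\identity-\Pi_\tria$) and $s=1$ (the first-order approximation property $\|h_\tria^{-1}(\identity - \Pi_\tria)v\|_{L_2(\Omega)} \lesssim \|v\|_{H^1_{0,\gamma}(\Omega)}$), using that $\V = [L_2(\Omega), H^1_{0,\gamma}(\Omega)]_{s,2}$ by definition. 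Conversely, the \emph{bounded decomposition} direction needs that any $v^1 + v^2$ with $v^k$ in the respective subspaces satisfies $\|v^1 + v^2\|_\V \lesssim (\|v^1\|_\V^2 + \|h_\tria^{-s} v^2\|_{L_2(\Omega)}^2)^{1/2}$; this is immediate since $\|v^1\|_\V$ controls the first piece and, for the second, the \emph{inverse inequality} on $\V_\tria$ gives $\|v^2\|_\V \lesssim \|h_\tria^{-s} v^2\|_{L_2(\Omega)}$ — this is exactly the hypothesis flagged in the text (``Because we have an inverse inequality on $\V_\tria$'').

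The main obstacle I anticipate is not any single estimate but assembling them correctly: one must be careful that the interpolation-space identity $\V = [L_2, H^1_{0,\gamma}]_{s,2}$ is used with the \emph{mesh-dependent} weight $h_\tria^{-s}$, so the interpolation argument for $\|h_\tria^{-s}(\identity-\Pi_\tria)v\|_{L_2(\Omega)} \lesssim \|v\|_\V$ should be run on the operator $v \mapsto h_\tria^{-s}(\identity-\Pi_\tria)v$ viewed as bounded $L_2(\Omega) \to L_2(\Omega)$ (for $s=0$) and $H^1_{0,\gamma}(\Omega) \to L_2(\Omega)$ (for $s=1$) with constants uniform in $\tria$, after which Riesz--Thorin (or the $K$-functional characterization) yields the bounded map $\V \to L_2(\Omega)$ with uniform constant. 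Once the two-sided norm equivalence
\[
\big(\Re(G_\tria)^{-1} v\big)(v) \eqsim \inf_{v = v^1 + v^2} \sum_{k=1}^2 \big(\Re(G_\tria^k)^{-1} v^k\big)(v^k)
\]
is in place, bounding the right-hand side above by $\max_k \|\Re(G_\tria^k)^{-1}\|\cdot (\|v^1\|_{\V_\tria^k}^2 + \|v^2\|_{\V_\tria^k}^2) \lesssim \max_k \|\Re(G_\tria^k)^{-1}\| \, \|v\|_\V^2$ via the stable decomposition gives the claimed estimate on $\|\Re(G_\tria)^{-1}\|$, and the boundedness estimate is routine. I would cite a standard reference for the abstract additive Schwarz lemma rather than reprove it.
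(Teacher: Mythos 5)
Your proposal is correct and follows essentially the same route as the paper: reduce to the abstract additive Schwarz/subspace-correction lemma (the paper cites Oswald), verify the stable decomposition by taking $u_1 = \Pi_\tria u$ with $\Pi_\tria$ the Scott--Zhang interpolator and $u_2 = (\identity - \Pi_\tria)u$, and use the inverse inequality $\|u\|_\V \lesssim \|h_\tria^{-s}u\|_{L_2(\Omega)}$ on $\V_\tria$ for the bounded-decomposition direction. The only cosmetic difference is that the paper bounds $\|u_1\|_\V$ via the triangle inequality $\|u_1\|_\V \leq \|u\|_\V + \|u_2\|_\V$ and then controls $\|u_2\|_\V$ by the inverse inequality, rather than proving $\V$-stability of $\Pi_\tria$ directly by interpolation as you do; both routes rely on exactly the same ingredients and yield the same constants.
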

\begin{proof}
    We have the (standard) inverse inequality $\| u\|_\V \lesssim \|h_\tria^{-s} u\|_{L_2(\Omega)}$ for $u \in \V_\tria$.
    Let $u \in \V_\tria$, then for any $(u_1, u_2) \in \V_\tria^1 \times \V_\tria^2$ with $u_1 + u_2 = u$ we find
    \[
    \|u \|_\V \leq \|u_1\|_\V +\|u_2\|_\V \lesssim \|u_1\|_\V + \|h_\tria^{-s}u_2\|_{L_2(\Omega)}.
    \]
    Denote $\Pi_\tria^1 \colon H_{0, \gamma}^1(\Omega) \to \V_\tria^1$ for the Scott-Zhang interpolator (\cite{247.2}).
    For $u \in \V_\tria$, take $u_1 = \Pi_\tria^1 u \in \V_\tria^1$ and $u_2 = u - \Pi_\tria^1 u \in \V_\tria^2$, then from approximation properties of the interpolator we infer
    \begin{align*}
     \|u_1\|_\V + \|h_\tria^{-s} u_2\|_{L_2(\Omega)} &\leq \|u\|_\V +\|u_2\|_\V+ \|h_\tria^{-s} u_2\|_{L_2(\Omega)}\\
     &\lesssim \|u\|_\V + \|h_\tria^{-s} u_2\|_{L_2(\Omega)} \lesssim \|u\|_\V.
    \end{align*}
    Since apparently for $u \in \V_\tria$,
    \[
    \|u\|_\V\eqsim \inf\big\{\|u_1\|_\V + \|h_\tria^{-s}u_2\|_{L_2(\Omega)}\colon u_1 \in \V_1,\,u_2\in \V_2,\,u_1+u_2=u\big\},
    \]
     the result follows from theory of subspace correction methods, e.g.~\cite{242.5}.
\end{proof}
On the space $\V_\tria^1$ we can apply the operator $G_\tria^1$ constructed earlier, whereas on $\V_\tria^2$ a simple scaling operator suffices.
Denote $N^{0, \ell}_\tria$ for the set of canonical Lagrange evaluation points of $\Ss_\tria^{0,\ell}$, and let $\Phi^\ell_\tria = \{ \phi^\ell_{\trianu} \colon \nu \in N^{0,\ell}_\tria \}$ be the corresponding nodal basis.
For some constant $\beta_2>0$, define an operator $R_\tria\colon \V_\tria^2 \to (\V_\tria^2)'$ by
\[
    (R_\tria u)(w) := \beta_2^{-1} \sum_{\nu \in N^{0,\ell}_\tria} \|h_\tria^{-s} \phi^\ell_\nu\|_{L_2(\Omega)}^2 u(\nu)w(\nu).
\]
\begin{proposition}\label{prop:scaling_higher_order}
    The operator $G^2_\tria := R_\tria^{-1}$ satisfies $G^2_\tria  \in \cLis((\V_\tria^2)', \V_\tria^2)$ uniformly.
\end{proposition}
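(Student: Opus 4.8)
The plan is to show that $R_\tria\colon \V_\tria^2\to(\V_\tria^2)'$ is, up to uniform constants, a diagonal scaling in the nodal basis $\Phi_\tria^\ell$, and that this diagonal scaling is spectrally equivalent to the Riesz map of the norm $\|h_\tria^{-s}\cdot\|_{L_2(\Omega)}$ on $\V_\tria^2$. Since $R_\tria$ is clearly symmetric and positive definite, $G_\tria^2:=R_\tria^{-1}$ is automatically coercive and self-adjoint; so the only real content is the two-sided norm bound, i.e.~uniform boundedness of $R_\tria$ and of $R_\tria^{-1}$ with respect to $\|h_\tria^{-s}\cdot\|_{L_2(\Omega)}$.

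First I would write $u=\sum_{\nu\in N^{0,\ell}_\tria}u(\nu)\phi_\nu^\ell$ and observe that, by definition,
\[
(R_\tria u)(u)=\beta_2^{-1}\sum_{\nu\in N^{0,\ell}_\tria}\|h_\tria^{-s}\phi_\nu^\ell\|_{L_2(\Omega)}^2\,|u(\nu)|^2 .
\]
So it suffices to prove the norm equivalence
\[
\|h_\tria^{-s}u\|_{L_2(\Omega)}^2\eqsim\sum_{\nu\in N^{0,\ell}_\tria}\|h_\tria^{-s}\phi_\nu^\ell\|_{L_2(\Omega)}^2\,|u(\nu)|^2\qquad(u\in\V_\tria^2),
\]
uniformly in $\tria\in\bbT$. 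The standard route is to localise to a single simplex $T\in\tria$: on $T$ the nodal basis $\{\phi_\nu^\ell|_T\}$ is the image under the affine pullback of the reference nodal basis on the reference simplex, so by a scaling argument $\|\phi_\nu^\ell\|_{L_2(T)}^2\eqsim|T|$ with constants depending only on $\ell$ and $d$, and on the fixed-dimensional space $\cP_\ell$ the map $p\mapsto(p(\hat\nu))_{\hat\nu}$ is a norm, giving $\|p\|_{L_2(\hat T)}^2\eqsim\sum_{\hat\nu}|p(\hat\nu)|^2$ on the reference element. Pulling back and summing over $T\in\tria$, using that $h_\tria^{-2s}$ is comparable to $h_T^{-2s}$ on $T$ and that each evaluation point $\nu$ lies in a uniformly bounded number of simplices (uniform shape regularity / the $K$-mesh property), yields the claimed equivalence; one also uses $\|h_\tria^{-s}\phi_\nu^\ell\|_{L_2(\Omega)}^2\eqsim h_\nu^{-2s}|\omega_\nu|$, so the diagonal weights are genuinely the right ones.

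The equivalence above shows $\|R_\tria\|_{\cL(\V_\tria^2,(\V_\tria^2)')}\lesssim\beta_2^{-1}$ and $\|R_\tria^{-1}\|_{\cL((\V_\tria^2)',\V_\tria^2)}\lesssim\beta_2$, both uniformly in $\tria$, and since $R_\tria$ is self-adjoint and positive definite, $\Re(R_\tria^{-1})^{-1}=R_\tria$, which closes the argument. The main obstacle is the localisation/scaling step: one must be careful that the constants in $\|p\|_{L_2(\hat T)}^2\eqsim\sum_{\hat\nu}|p(\hat\nu)|^2$ and in the element-size comparisons depend only on $d$ and $\ell$ (not on $\tria$), which is exactly where uniform shape regularity enters; the finite-overlap bookkeeping when passing from per-element sums to the global sum over $N^{0,\ell}_\tria$ is routine once that is in place.
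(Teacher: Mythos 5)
Your argument is correct and follows essentially the same route as the paper: reduce to showing that $\Phi_\tria^\ell$ is a uniformly $\|h_\tria^{-s}\cdot\|_{L_2(\Omega)}$-stable basis, localize to a single simplex where $h_\tria$ is constant, invoke the equivalence of norms for the fixed-dimensional space $\cP_\ell$ on the reference element, and re-assemble over $T\in\tria$ using shape regularity and finite overlap. The paper states the same chain of equalities and equivalences slightly more tersely, leaving the reference-element scaling implicit; there is no substantive difference.
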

\begin{proof}
    It is not hard to see that the result follows if $\Phi^\ell_\tria$ is a (uniformly) $\|h_\tria^{-s} \cdot \|_{L_2(\Omega)}$-stable basis. Writing $N_T^{0, \ell} := N_\tria^{0,\ell} \cap T$, this stability can be deduced from
        \begin{align*}
            \Big\|h_\tria^{-s} \sum_{\mathclap{\nu \in N^{0, \ell}_\tria}} c_\nu \phi_\nu^\ell \Big\|^2_{L_2(\Omega)} &= \sum_{T \in \tria} h_T^{-2s} \Big\| \sum_{\mathclap{\nu \in N_T^{0,\ell}}} c_\nu \phi_\nu^\ell \Big\|^2_{L_2(T)} \\
                &\eqsim \sum_{T \in \tria} h_T^{-2s} \sum_{\nu \in N_T^{0,\ell}} |c_\nu|^2 \| \phi_\nu^\ell\|^2_{L_2(T)} = \sum_{\nu \in N_\tria^{0,\ell}} |c_\nu|^2 \|h_\tria^{-s} \phi_\nu^\ell\|^2_{L_2(\Omega)}.\qedhere
        \end{align*}
\end{proof}

\subsubsection{Implementation}\label{sec:implementation_higher_order}
Equipping $\V_\tria$ and $\V_\tria^2$ by $\Phi_\tria^\ell$, and $\V_\tria^1$ by $\Phi_\tria^1$, the matrix representation of $G_\tria := \sum_{k = 1}^2 I_\tria^k G_\tria^k (I_\tria^k)' \in \cLis(\V_\tria', \V_\tria)$   is given by
\[
\bm{G}_\tria=\bm{q}_\tria \bm{G}_\tria^1 \bm{q}_\tria^\top +\bm{G}_\tria^{2},
\]
with $\bm{G}_\tria^1 $ either from Sect.~\ref{sec:implementation_biorth_basis} or Sect.~\ref{sec:implementation_biorth_basis_cont},
\[
    (\bm{q}_\tria)_{\nu' \nu} = \phi^\ell_{\trianu'}(\nu) \quad (\nu' \in N^{0,\ell}_\tria, \nu \in N^{0,1}_\tria).
\]
and
\[
    \bm{G}_\tria^2 = \beta_2 \diag \{  \|h_\tria^{-s}\phi_\nu^\ell\|_{L_2(\Omega)}^{-2}\colon \nu \in N^{0,\ell}_\tria\}.
\]

\subsection{Manifolds}
Let $\Gamma$ be a compact
  $d$-dimensional Lipschitz, piecewise smooth manifold in $\R^{d'}$ for some $d' \geq d$ with or without boundary $\partial\Gamma$.
 For some closed measurable $\gamma \subset \partial\Gamma$ and $s \in [0,1]$, let
 \[
\V:=[L_2(\Gamma),H^1_{0,\gamma}(\Gamma)]_{s,2},\quad \W:=\V'.
\]
 We assume that $\Gamma$ is given as the closure of the disjoint union of $\cup_{i=1}^p \chi_i(\Omega_i)$, with, for $1 \leq i \leq p$,
$\chi_i\colon \R^d \rightarrow \R^{d'}$ being some smooth regular parametrization, and $\Omega_i \subset \R^d$ an open polytope.
 W.l.o.g.\ assuming that for $i \neq j$, $\overline{\Omega}_i \cap \overline{\Omega}_j=\emptyset$, we define
 \[
 \chi\colon \Omega:=\cup_{i=1}^p \Omega_i \rightarrow \cup_{i=1}^p \chi_i(\Omega_i) \text{ by } \chi|_{\Omega_i}=\chi_i.
 \]

Let $\bbT$ be a family of conforming partitions $\tria$ of $\Gamma$ into `panels' such that, for $1 \leq i \leq p$,  $\chi^{-1}(\tria) \cap \Omega_i$ is a uniformly shape regular conforming partition of $\Omega_i$ into $d$-simplices (that for $d=1$ satisfies a uniform $K$-mesh property).
We assume that $\gamma$ is a (possibly empty) union of `faces' of $T \in \tria$ (i.e., sets of type $\chi_i(e)$, where $e$ is a $(d-1)$-dimensional face of $\chi_i^{-1}(T)$).

The usual lowest order boundary element spaces are defined by
\begin{align*}
\Ss^{-1,0}_\tria&:=\{u \in L_2(\Gamma)\colon u \circ \chi |_{\chi^{-1}(T)} \in \cP_0 \,\,(T \in \tria)\},,\\
\Ss^{0,1}_{\tria}&:=\{u \in H^1_{0,\gamma}(\Gamma)\colon u \circ \chi |_{\chi^{-1}(T)} \in \cP_1 \,\,(T \in \tria)\},
\end{align*}
with their canonical bases denoted as $\Sigma_\tria=\{\1_T\colon T \in \tria\}$
and $\Phi_\tria=\{\phi_\nu\colon \nu \in N_\tria^0\}$, respectively, with $N_\tria^0$ the vertices of $\tria$ not on $\gamma$.

The construction of the preconditioners in the domain case relied on the explicit construction of a collection $\Psi_\tria$ biorthogonal to $\Phi_\tria$, and on the explicit computation of a (bi)orthogonal projection of $\W_\tria:=\Span \Psi_\tria$ onto either $\Ss^{-1,0}_\tria$ or $\Ss^{0,1}_{\tria}$, where orthogonality was interpreted w.r.t.~the $L_2(\Omega)$-scalar product.
Both the construction of $\Psi_\tria$ and the computation of the (bi)orthogonal projection could be reduced to computations on the individual elements in the partition, which yielded explicit expressions.

When attempting to transfer everything to the manifold case, a problem is the appearance of a generally non-constant weight $x \in |\partial\chi(x)|$ in $L_2(\Gamma)$-scalar product
\[
\langle u,v\rangle_{L_2(\Gamma)}=\int_\Omega u(\chi(x)) v(\chi(x))|\partial\chi(x)|\,dx.
\]
To deal with this, following \cite{249.97}, on $L_2(\Gamma)$ we define an additional `mesh-dependent' scalar product
\[
 \langle u,v\rangle_{\tria}:=\sum_{T \in \tria} \frac{|T|}{|\chi^{-1}(T)|} \int_{\chi^{-1}(T)} u(\chi(x))v(\chi(x))dx.
\]
which is constructed by replacing on each $\chi^{-1}(T)$, the Jacobian $|\partial \chi|$ by its average $\frac{|T|}{|\chi^{-1}(T)|}$ over $\chi^{-1}(T)$, and interpret (bi)orthogonality with respect to this scalar product.

Now all steps in the \emph{construction} of the preconditioners carry over, and yield preconditioners for the manifold case whose implementations are exactly as described in Sect.~\ref{sec:implementation_biorth_basis} and Sect.~\ref{sec:implementation_biorth_basis_cont},
where the patch volumes $|\omega_{\tria,\nu}|$ now should be read as the volumes of the patches on $\Gamma$.

To \emph{prove} that the constructed preconditioners are indeed uniform preconditioners requires additional work due to the use of the mesh-dependent scalar product. We refer to \cite{249.97} for details.
The key ingredient is that not only the norm associated to $\langle \cdot,\cdot\rangle_{L_2(\Gamma)}$ is uniformly equivalent to $\|\cdot\|_{L_2(\Gamma)}$, but also that  $\langle\cdot,\cdot\rangle_{L_2(\Gamma)}$ and $ \langle \cdot,\cdot \rangle_{\tria}$ are close in the sense that
\begin{equation}\label{closeness}
|\langle v,u\rangle_\tria-\langle v,u\rangle_{L_2(\Gamma)}| \lesssim \|h_\tria v\|_{L_2(\Gamma)} \|u\|_{L_2(\Gamma)} \quad (v,u \in L_2(\Gamma)).
\end{equation}

\section{Numerical Results}\label{sec:numerical_results}
Let $\Gamma = \partial [0,1]^3 \subset \R^3$ be the boundary of the unit cube, $\V := H^{1/2}(\Gamma)$, $\W := H^{-1/2}(\Gamma)$,
and ${\V_\tria=\Ss^{0,\ell}_\tria} \subset \V$ the trial space of continuous piecewise polynomials of degree $\ell$ w.r.t.~a partition~$\tria$.
We shall evaluate preconditioning of essentially a discretized Hypersingular Integral operator.

The Hypersingular Integral operator $\tilde A \in \cL(\V, \V')$ is only semi-coercive, since it has a non-trivial kernel equal to $\Span\{\1\}$.
Solving $\tilde Au=f$ for $f$ with $f(\1)=0$ is, however, equivalent to solving $A u =f$ with $A$ given by $(Au)(v) = (\tilde Au)(v) + \alpha \langle u, \1 \rangle_{L_2(\Gamma)} \langle v, \1 \rangle_{L_2(\Gamma)}$ for some $\alpha > 0$.
This operator $A $ is in $\cLis(\V, \V')$, and we shall consider preconditioning discretizations $A_\tria \in \cLis(\V_\tria, \V_\tria')$ of $A$.
By comparing different values numerically, we found $\alpha = 0.05$ to give good results in our examples.

As opposite order operator $B$ we take the Weakly Singular integral operator, which on compact $2$-dimensional manifolds is known to be in $\cLis(\W, \W')$.
We will compare preconditioners $G_\tria$ based on the discretizations $B_\tria^\U \in \cLis(\U_\tria, \U_\tria')$ of $B$, for $\U_\tria = \Ss_{\tria}^{-1,0}$ or $\U_\tria = \Ss_{\tria}^{0,1}$ equipped with the canonical bases $\Sigma_\tria=\{\1_T\colon T \in \tria\}$ and $\Phi_\tria=\{\phi_\nu\colon \nu \in N_\tria\}$, respectively,
cf.~Sect.~\ref{sec:implementation_biorth_basis} or Sect.~\ref{sec:implementation_biorth_basis_cont}.

For $\ell = 1$ (the lowest order case) and $\V_\tria$ being equipped with $\Phi_\tria$, the matrix representation of the preconditioner $G_\tria$ reads either as (Sect.~\ref{sec:implementation_biorth_basis})
\[
\bm{G}_\tria=\bm{G}^{-1,0}_\tria=\bm{D}_\tria^{-1}\big(\bm{p}_\tria^\top \bm{B}_\tria^\U \bm{p}_\tria+\beta_1 \bm{D}_\tria^{3/2}\big)\bm{D}_\tria^{-1}
\]
where $\bm{B}_\tria^\U=(B \Sigma_\tria)(\Sigma_\tria)$, $\bm{D}_\tria = \diag \{ |\omega_\nu| \colon \nu \in N_\tria\}$,
$
(\bm{p}_\tria)_{T \nu} = \begin{cases} 1 & \text{if } T \subset \omega_\nu,\\
                                           0 & \text{otherwise,}
                                        \end{cases}
$ and $\beta_1>0$ is some constant, or
as (Sect.~\ref{sec:implementation_biorth_basis_cont})
\[
\bm{G}_\tria=\bm{G}^{0,1}_\tria=\bm{D}_\tria^{-1}\big(\bm{B}_\tria^\U+\beta_1 \bm{D}_\tria^{3/2}\big)\bm{D}_\tria^{-1}
\]
where $\bm{B}_\tria^\U=(B \Phi_\tria)(\Phi_\tria)$, $\bm{D}_\tria = \diag \{ |\frac{\omega_\nu}{d+1}| \colon \nu \in N_\tria\}$, and $\beta_1>0$ is some constant.

For $\ell > 1$ denote the above $\bm{G}_\tria$ by either $\bm{G}^{1,-1,0}_\tria$ or $\bm{G}^{1,0,1}_\tria$, then, with $\V_\tria=\Ss_\tria^{0,\ell}$ being equipped with the standard nodal basis $\{\phi^\ell_\nu\colon \nu \in N_\tria^{\ell}\}$, the matrix representation of the preconditioner $G_\tria \in \cLis((\Ss_\tria^{0,\ell})', \Ss_\tria^{0,\ell})$ from Sect.~\ref{sec:implementation_higher_order} is
\[
    \bm{G}^{*}_\tria = \bm{q}_\tria \bm{G}_\tria^{1,*} \bm{q}_\tria^\top +  \beta_2 \diag \{  \|h_\tria^{-\frac12}\phi_\nu^\ell\|_{L_2(\Omega)}^{-2}\colon \nu \in N^{\ell}_\tria\},
\]
where either $*=-1,0$ or $*=0,1$, and
$(\bm{q}_\tria)_{\nu' \nu} = \phi^\ell_{\trianu'}(\nu)$ ($\nu' \in N^{\ell}_\tria, \nu \in N^{1}_\tria$).

The (full) matrix representations of the discretized singular integral operators $\bm{A}_\tria$ and $\bm{B}_\tria^{\U}$ are calculated using the BEM++ software package~\cite{249.04}.
Condition numbers are determined using Lanczos iteration with respect to $\lnrm\cdot\rnrm:=\|\bm{A}_\tria^{\frac{1}{2}}\cdot\|$.

\subsection{Uniform refinements}
Consider a conforming triangulation $\tria_1$ of $\Gamma$ consisting of $2$ triangles per side, so $12$ triangles with $8$ vertices in total.
We let $\bbT$ be the sequence $\{\tria_k\}_{k \geq 1}$ of uniform newest vertex bisections, where $\tria_k \succ \tria_{k-1}$ is found by bisecting each triangle from $\tria_{k-1}$.

With $\V_\tria=\Ss_\tria^{0,1}$, Table~\ref{tbl:unif} compares the condition numbers for the preconditioned system given by Sect.~\ref{sec:implementation_biorth_basis} ($\U_\tria = \Ss_\tria^{-1,0}$)
and by Sect.~\ref{sec:implementation_biorth_basis_cont} ($\U_\tria = \Ss_{\tria}^{0,1}$).
We see that the condition numbers remain nicely bounded, and that both choices give similar condition numbers.

Instead of using the `full matrices', we can consider compressed hierarchical matrices to approximate the stiffness matrices $\bf{A}_\tria$ and $\bf{B}^\U_\tria$ for finer partitions.
Table~\ref{tbl:hmat} gives the condition numbers, again for uniform refinements, but now using hierarchical matrices based on adaptive cross approximation~\cite{127.7, 19.896}.
We see that even for large systems, our preconditioner gives very satisfactory results.

Finally, consider the (higher order) trial space $\V_\tria = \Ss_\tria^{0,3}$. Table~\ref{tbl:higherorder} gives condition numbers for the preconditioned system, using the method described in
Sect.~\ref{sec:implementation_higher_order}.

\begin{table}[]
\centering
\caption{Spectral condition numbers of the preconditioned hypersingular system, using uniform refinements, discretized by continuous piecewise linears $\Ss_\tria^{0,1}$, with $\alpha = 0.05$.
    The preconditioners $\bm{G}_\tria^{-1,0}$ and $\bm{G}_\tria^{0,1}$ are constructed using the single layer operator discretized on $\U_\tria =\Ss_\tria^{-1,0}$ (Sect.~\ref{sec:implementation_biorth_basis}) and $\U_\tria = \Ss_\tria^{0,1}$ (Sec~\ref{sec:implementation_biorth_basis_cont}), respectively, where have used $\beta_1 = 0.65$ in the first case and $\beta_1 = 0.34$ in the second case.
}
\begin{tabular}{rrrr}
\toprule
dofs &  $\kappa_S(\bm{A}_\tria)$ & $\kappa_S(\bm{G}_\tria^{-1,0} \bm{A}_\tria)$ & $\kappa_S(\bm{G}_\tria^{0,1} \bm{A}_\tria)$ \\
\midrule
$   14$&$   3.0$&$ 2.71$&$2.64$\\
$   50$&$   7.1$&$ 2.36$&$2.37$\\
$  194$&$  14.2$&$ 2.25$&$2.26$\\
$  770$&$  28.7$&$ 2.30$&$2.27$\\
$ 3074$&$  57.8$&$ 2.29$&$2.27$\\
$12290$&$ 115.7$&$ 2.29$&$2.27$\\
$49154$&$ 231.4$&$ 2.30$&$2.27$\\
\bottomrule
\end{tabular}\label{tbl:unif}
\end{table}

\begin{table}[]
\centering
\caption{In the same setting as Table~\ref{tbl:unif}, but using compressed hierarchical matrices.}
\begin{tabular}{rrrr}
\toprule
dofs &  $\kappa_S(\bm{A}_\tria)$ & $\kappa_S(\bm{G}_\tria^{-1,0} \bm{A}_\tria)$ & $\kappa_S(\bm{G}_\tria^{0,1} \bm{A}_\tria)$ \\
\midrule
$ 12290$&$ 115.6$&$ 2.29$& $2.27$ \\
$ 24578$&$ 168.7$&$ 2.24$& $2.24$ \\
$ 49154$&$ 231.3$&$ 2.30$& $2.27$ \\
$ 98306$&$ 336.9$&$ 2.25$& $2.25$ \\
$196610$&$ 461.7$&$ 2.30$& $2.28$ \\
$393218$&$ 671.9$&$ 2.27$& $2.28$ \\
$786434$&$ 751.6$&$ 2.30$& $2.30$ \\
\bottomrule
\end{tabular}\label{tbl:hmat}
\end{table}

\begin{table}[]
\centering
\caption{Spectral condition numbers of the preconditioned hypersingular system, using uniform refinements, discretized by continuous piecewise cubics $\Ss_\tria^{0,3}$, with $\alpha = 0.05$.
    The higher order preconditioners $\bm{G}_\tria^{-1,0}$ and $\bm{G}_\tria^{0,1}$ are constructed as described in Sect.~\ref{sec:implementation_higher_order}, by using the
    preconditioners from Table~\ref{tbl:unif} with
constants $\beta_1 = 0.65, \beta_2 =0.065$ in the first case and $\beta_1 = 0.34, \beta_2 = 0.065$ in the second case.}
\begin{tabular}{rrrr}
\toprule
dofs &  $\kappa_S(\bm{A}_\tria)$ & $\kappa_S(\bm{G}_\tria^{-1,0} \bm{A}_\tria)$ & $\kappa_S(\bm{G}_\tria^{0,1} \bm{A}_\tria)$ \\
\midrule
$   56 $&$   19.49 $&$     4.75  $&$     4.72$\\
$  218 $&$   36.27 $&$     5.18  $&$     5.17$\\
$  866 $&$   74.78 $&$     6.23  $&$     6.20$\\
$ 3458 $&$  150.73 $&$     6.55  $&$     6.48$\\
$13826 $&$  301.97 $&$     6.63  $&$     6.57$\\
$55298 $&$  603.86 $&$     6.65  $&$     6.58$\\
\bottomrule
\end{tabular}\label{tbl:higherorder}
\end{table}

\subsection{Local refinements}
Here we take $\bbT$ to be the sequence $\{\tria_k\}_{k \geq 1}$ of locally refined triangulations, where $\tria_k \succ \tria_{k-1}$ is constructed using
conforming newest vertex bisection to refine all triangles in $\tria_{k-1}$ that touch a corner of the cube.

Table~\ref{tbl:local} gives condition numbers of the preconditioned hypersingular system discretized by continuous piecewise linears, i.e.~$\V_\tria = \Ss_\tria^{0,1}$.
The condition numbers remain bounded under local refinements, confirming uniformity of the preconditioner w.r.t.~$\bbT$.

\begin{table}[]
\centering
\caption{Spectral condition numbers of the preconditioned hypersingular system discretized by $\Ss_\tria^{0,1}$ using local refinements at each of the eight cube corners.
    Both preconditioners $\bm{G}_\tria^{-1,0}$ and $\bm{G}_\tria^{0,1}$ are constructed with same parameters as in Table~\ref{tbl:unif}, and are compared against
diagonal preconditioning.  The second column is defined by $h_{\tria, min} := \min_{T \in \tria} h_T$.}
\begin{tabular}{@{}rlccc@{}}
\toprule
dofs     &  $h_{\tria,min}$   &$\kappa_S(\diag(\bm{A}_\tria)^{-1}\bm{A}_\tria) $ & $\kappa_S(\bm{G}_\tria^{-1,0} \bm{A}_\tria)$ & $\kappa_S(\bm{G}_\tria^{0,1} \bm{A}_\tria)$ \\
\midrule
$    8 $&$  1.4\cdot 10^0     $&$      2.15 $&$      2.83 $&$      2.68$\\
$   14 $&$  1.0\cdot 10^0     $&$      2.79 $&$      2.71 $&$      2.64$\\
$  314 $&$  1.1\cdot 10^{-2}  $&$     12.11 $&$      2.21 $&$      2.20$\\
$  626 $&$  1.2\cdot 10^{-4}  $&$     13.18 $&$      2.31 $&$      2.30$\\
$  938 $&$  1.3\cdot 10^{-6}  $&$     13.43 $&$      2.36 $&$      2.36$\\
$ 1250 $&$  1.4\cdot 10^{-8}  $&$     13.51 $&$      2.39 $&$      2.38$\\
$ 1562 $&$  1.6\cdot 10^{-10} $&$     13.53 $&$      2.41 $&$      2.39$\\
$ 1850 $&$  2.5\cdot 10^{-12} $&$     13.55 $&$      2.41 $&$      2.40$\\
\bottomrule
\end{tabular}\label{tbl:local}
\end{table}

\section{Conclusion}
Using the framework of operator preconditioning, we have constructed uniform preconditioners for elliptic operators of orders $2s \in [0,2]$ discretized by continuous finite (or boundary) elements.
The evaluation of the preconditioners requires the application of an opposite order operator plus minor cost of linear complexity. Compared to earlier proposals, both the construction of a so-called dual-mesh and the inversion of a non-diagonal matrix are avoided, and our results are valid without constraints on the mesh-grading.
For lowest order finite elements the computed condition numbers of the preconditioned system are below $2.5$.
\bibliographystyle{alpha}
\newcommand{\etalchar}[1]{$^{#1}$}

\end{document}